\newcommand{\bQ}{\mathbb{Q}}
\newcommand{\bR}{\mathbb{R}}
\newcommand{\bA}{\mathbf{A}} 
\newcommand{\bB}{\mathbf{B}}
\newcommand{\bU}{\mathbf{U}}
\newcommand{\bV}{\mathbf{V}}
\newcommand{\bM}{\mathbf{M}}
\newcommand{\bN}{\mathbb{N}}
\newcommand{\bC}{\mathbf{C}}
\newcommand{\cM}{\mathcal{M}}
\newcommand{\restr}{\mathord{\upharpoonright}}
\newcommand{\ta}{{\bar{a}}}
\newcommand{\tb}{{\bar{b}}}
\newcommand{\ty}{\bar{y}}
\newcommand{\Age}{\operatorname{Age}}
\newcommand{\Emb}{\operatorname{Emb}}
\newcommand{\EEmb}{\operatorname{EEmb}}
\newcommand{\Aut}{\operatorname{Aut}}
\newcommand{\Sym}{\operatorname{Sym}}
\newcommand{\fix}{\operatorname{fix}}
\newcommand{\Fraisse}{Fra\"\i{}ss\'e}
\newcommand{\injto}{\hookrightarrow}
\newcommand{\im}{\operatorname{im}}
\newcommand{\Th}{\operatorname{Th}}
\newcommand{\tp}{\operatorname{tp}}
\theoremstyle{plain} \newtheorem{theorem}{Theorem}[section]
\newtheorem{proposition}[theorem]{Proposition}
\newtheorem{lemma}[theorem]{Lemma}
\newtheorem{corollary}[theorem]{Corollary} 
\theoremstyle{definition}
\newtheorem{definition}[theorem]{Definition}
\newtheorem{example}[theorem]{Example}
\theoremstyle{remark}
\author[Ch.\,Pech]{Christian Pech}
\address{}
\email{cpech@freenet.de}
\urladdr{}
\author[M.\,Pech]{Maja Pech} \address{Department of
  Mathematics\\University of Novi Sad} 
\email{maja@dmi.uns.ac.rs}
\urladdr{http://people.dmi.uns.ac.rs/~maja/}
\title[Reconstructing the topology\dots]{Reconstructing the topology of the elementary self-embedding monoids of countable saturated structures}
\subjclass[2010]{Primary: 08A35; Secondary: 54H15, 03C15, 03C50}
\keywords{transformation monoid, topological monoid, reconstruction, saturated structure, homogeneous structure, automatic homeomorphicity, small index property}
\date{\today}
\begin{document}
	\begin{abstract}
		Every transformation monoid comes equipped with a canonical topology---the topology of pointwise convergence. For some structures, the topology of the endomorphism monoid can be reconstructed from its underlying abstract monoid. This phenomenon is called \emph{automatic homeomorphicity}.
		
		In this paper we show that whenever the automorphism group of a countable saturated structure has automatic homeomorphicity and  a trivial center, then the monoid of elementary self-embeddings has automatic homeomorphicity, too.  
		
		As a second result we strengthen a result by Lascar by showing that whenever $\bA$ is a countable $\aleph_0$-categorical $G$-finite structure whose automorphism group has a trivial center and if $\bB$ is any other countable structure, then every isomorphism between the monoids of elementary self-embeddings is a homeomorphism. 
	\end{abstract}

	\maketitle

\section{Automatic homeomorphicity}
If we  consider a set $A$ as a discrete topological space, then the set of self-mappings $A^A$ of $A$ is naturally equipped with the product topology. A sub-basis of this topology is given by
$\{\Phi_{a,b}\mid a,b\in A\}$, where $\Phi_{a,b}$ is given by
\[ \Phi_{a,b}=\{f\in A^A\mid f(a)=b\}.\]
This topology is also known as the Tychonoff-topology or topology of pointwise convergence. 

The set $A^A$, together with the composition operation is called the full transformation monoid on $A$. We denote it by $T_A$. Every submonoid $M$ of $T_A$ is  naturally equipped with the subspace-topology (in particular, the composition on $M$ is continuous with respect to this topology). 

When using the predicates \emph{open} or \emph{closed} in connection with transformation monoids we make an implicit distinction between transformation monoids in general and the special case of permutation groups. A transformation monoid on $A$ is called \emph{closed} (\emph{open}) whenever it is closed (open) as a subspace of $A^A$. On the other hand, a permutation group is called \emph{closed} (\emph{open}) if it is closed (open) as a subspace of the space $\Sym(A)$ of all permutations on $A$. 

It is well-known that a permutation group is closed if and only if it is the automorphism group of a first order structure. Similarly, a transformation monoid is closed if and only if it is the endomorphism monoid of such a structure.  That is the reason why we are mainly concerned with closed permutation groups and closed transformation monoids. 

As we saw, every transformation monoid determines a topological monoid and this, in turn, gives rise to an abstract monoid. In each step information about the original transformation monoid is lost. The question is, how much information is preserved? The answer depends on the scope of our observation. E.g., it was shown in \cite{BodPin15} that two oligomorphic permutation groups of countable degree are topologically isomorphic if and only if the two permutations groups, when considered in a suitable way as $G$-sets, generate the same pseudo-variety. Thus, from topological knowledge we obtain very concrete information about the action. In other examples, topological information is reconstructed from the completely abstract algebraic level. E.g., every group-isomorphism of the symmetric group on $\bN$ to another closed permutation group of countable degree is a homeomorphism. This is a direct consequence from the observation in \cite{DixNeuTho86}, that $\Sym(\bN)$ has the small-index property (cf. \cite{Ros09b}). This phenomenon is called \emph{automatic homeomorphicity}:
\begin{definition}
	A transformation monoid (a permutation group) is said to have \emph{automatic homeomorphicity with respect to a class $\cM$} of transformation monoids (permutation groups) if every isomorphism to a member of $\cM$ is a homeomorphism.
\end{definition}
If $G$ is a closed permutation group on a countable set $A$, and if $G$ has automatic homeomorphicity with respect to the set of all closed permutation groups on $A$, then we will simply say, that $G$ has \emph{automatic homeomorphicity}. Similarly, if $M$ is a closed transformation monoid on $A$ and if $M$ has automatic homeomorphicity with respect to the set of all closed transformation monoids on $A$, then we say just that $M$ has \emph{automatic homeomorphicity}.

For groups the notion of automatic homeomorphicity (and, more generally, the notion of automatic continuity) has been studied in one or the other guise for a long time. The richest source of permutation groups with automatic homeomorphicity is given by the permutation groups that satisfy the small index property. Another, quite different approach to reconstructing the topology of a permutation group from its abstract group structure are Rubin's (weak) $\forall\exists$-interpretations. A good overview about the existing literature in this area of research is given by the surveys \cite{Ros09b} and \cite{Mac11}. 

Automatic homeomorphicity for transformation monoids was first considered in \cite{BodPinPon17}. Some examples of transformation monoids with automatic homeomorphicity include:
\begin{itemize}
	\item the monoid of injective functions on $\bN$ (\cite{BodPinPon17}),
	\item the full transformation monoid $T_{\bN}$ (\cite{BodPinPon17}),
	\item the self-embedding monoid of the Rado-graph, (\cite{BodPinPon17}),
	\item the endomorphism monoid of the Rado-graph (\cite{BodPinPon17}),
	\item the self-embedding monoid of the countable universal homogeneous digraph (\cite{BodPinPon17}),
	\item the endomorphism monoid of $(\bQ,<)$ (\cite{BehTruVar17}),
	\item the endomorphism monoid of $(\bQ,\le)$ (\cite{BehTruVar17}),
\end{itemize} 
Recently, also the automatic homeomorphicity for the self-embedding monoids and endomorphism monoids of $(\bQ,\operatorname{betw})$, $(\bQ,\operatorname{circ})$, and $(\bQ,\operatorname{sep})$ were established (cf.~\cite{TruVar16}).

Additionally, in \cite{PecPec15b} it was shown that the monoids of  non-expansive selfmaps of the rational Urysohn space and of the rational Urysohn sphere have automatic homeomorphicity with respect to the class of closed monoids on $\bN$ with finitely many weak orbits.

The usual way for showing automatic homeomorphicity of a closed transformation monoid $M$ goes as follows:
\begin{enumerate}
	\item show automatic homeomorphicity of the group $G$ of invertible elements in $M$,
	\item show automatic homeomorphicity of the closure of $G$ in $M$,
	\item show automatic homeomorphicity of $M$.
\end{enumerate}
For the first step, there is a wealth of results in the literature. Usually one can observe that $G$ has the small index property. In the third step so-called gate-techniques are employed (cf. \cite{BodPinPon17}, \cite{PecPec15b}). So far the hardest step has always been to lift automatic homeomorphicity from $G$ to the closure of $G$ in $M$. To cite \cite{BodPinPon17}:
\begin{quote}
	Surprisingly, it turns out to be a non-trivial task to show for a given closed oligomorphic subgroup $\mathbf{G}$ of $\mathbf{S}$ with automatic continuity (the strongest form of reconstruction) that the closure $\overline{\mathbf{G}}$ of $\mathbf{G}$ in $\mathbf{O}^{(1)}$ has some form of reconstruction.
\end{quote}
A good part of \cite{BodPinPon17} and of \cite{BehTruVar17} is dedicated to this problem, in order to arrive at the above mentioned results.

The problem of lifting automatic homeomorphicity from a closed permutation group $G$ to its closure in the full transformation monoid can be reduced to a purely algebraic sufficient condition:
\begin{proposition}[{\cite[Lemma 12]{BodPinPon17}}]\label{bppcrit}
	Let $A$ be a countable set, and let $M\le T_A$ be a closed transformation monoid whose group $G$ of invertible elements lies dense in $M$ and has automatic homeomorphicity. If the only injective endomorphism of $M$ that fixes every element of $G$ is the identity on $M$, then $M$ has automatic homeomorphicity.	
\end{proposition}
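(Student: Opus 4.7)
The plan is to reduce continuity of $\phi$ to the algebraic hypothesis in three phases. Given an abstract monoid isomorphism $\phi\colon M\to N$ with $N$ a closed transformation monoid on $A$, the goal is to show $\phi$ is a homeomorphism.

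\textbf{Phase 1 (transfer to the group of units).} The map $\phi$ restricts to a group isomorphism $\phi|_G\colon G\to H$, where $H$ denotes the group of units of $N$. I would first verify that $H$ is closed in $\Sym(A)$: if $h_n\to h$ in the Polish topology on $\Sym(A)$, then both $h_n$ and $h_n^{-1}$ converge pointwise in $T_A$, and since $N$ is $T_A$-closed, both limits lie in $N$, whence $h\in H$. Because on closed subgroups of $\Sym(A)$ the Polish topology coincides with the $T_A$-subspace topology, the automatic homeomorphicity of $G$ then guarantees that $\phi|_G\colon G\to H$ is a homeomorphism.

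\textbf{Phase 2 (continuous extensions).} The decisive step is to construct continuous monoid homomorphisms $\tilde\phi\colon M\to N$ and $\tilde\psi\colon N\to M$ extending $\phi|_G$ and $\phi^{-1}|_H$ respectively. The candidate is $\tilde\phi(f):=\lim_\alpha \phi(g_\alpha)$, evaluated in $T_A$, for a net $(g_\alpha)\subseteq G$ with $g_\alpha\to f$. The main technical hurdle is that a generic $T_A$-approximation $g_\alpha\to f$ need not render $\phi(g_\alpha)$ a $T_A$-Cauchy net, since convergence in the Sym-topology on $G$ is strictly stronger than convergence in $T_A$. One must therefore choose the approximants coherently (for example along an exhaustion of $A$ by finite subsets with $g_\alpha$ agreeing with $f$ on the corresponding piece) and exploit the homeomorphicity of $\phi|_G$ together with the $T_A$-closedness of $N$ to verify that the pointwise limit exists in $N$, is independent of the approximating net, and gives a multiplicative map. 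This is the heart of the proof.

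\textbf{Phase 3 (identification and conclusion).} Once the extensions $\tilde\phi$ and $\tilde\psi$ are in place, the rest is soft. The composition $\tilde\psi\circ\tilde\phi\colon M\to M$ is continuous and restricts to the identity on the dense subset $G$, so by continuity it equals $\mathrm{id}_M$; symmetrically $\tilde\phi\circ\tilde\psi=\mathrm{id}_N$, so $\tilde\phi$ is a bijection with continuous inverse $\tilde\psi$. Now consider $\eta:=\phi^{-1}\circ\tilde\phi\colon M\to M$: it is a monoid endomorphism of $M$, it is injective as a composite of bijections, and it fixes every element of $G$ because $\tilde\phi|_G=\phi|_G$. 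The hypothesis then forces $\eta=\mathrm{id}_M$, i.e.\ $\tilde\phi=\phi$, so $\phi$ is continuous. Continuity of $\phi^{-1}$ follows by the symmetric argument applied to $N$, whose units $H$ inherit automatic homeomorphicity from $G$ through the topological isomorphism $\phi|_G$, and for which the analogous injective-endomorphism condition transfers from $M$ via $\phi$. The whole argument hinges on successfully carrying out Phase 2.
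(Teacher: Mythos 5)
Your high-level reduction is the right one and matches the argument in \cite{BodPinPon17} that the paper cites (and that the paper itself reuses, e.g.\ in the proof of Theorem~\ref{genLascar}): Phase~1 and Phase~3 are sound modulo small slips, and defining $\eta:=\phi^{-1}\circ\tilde\phi$ so as to invoke the algebraic hypothesis is exactly the intended trick. But Phase~2, which you rightly call the heart of the proof, is not actually carried out, and the obstacle you identify rests on a false premise. You assert that ``convergence in the Sym-topology on $G$ is strictly stronger than convergence in $T_A$,'' yet on \emph{any} subgroup $G\le\Sym(A)$ the two subspace topologies coincide: the extra $\Sym(A)$-subbasic sets $\{g:g^{-1}(a)=b\}$ equal $\{g:g(b)=a\}$ and are therefore already $T_A$-subbasic. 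What \emph{can} fail is that a $T_A$-Cauchy net in $G$ has its limit outside $G$ (indeed outside $\Sym(A)$), which is exactly the situation for $f\in M\setminus G$; mere continuity of $\phi|_G$ at points of $G$ says nothing about such nets, and you give no mechanism to control their images under $\phi$.

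The missing ingredient is uniform continuity. The $T_A$-uniformity restricted to $G$ (entourages $\{(g,h):g|_F=h|_F\}$ for finite $F$) is precisely the left group uniformity on $G$, since $g^{-1}h$ fixes $F$ pointwise iff $g|_F=h|_F$; and a continuous homomorphism of topological groups is automatically left-uniformly continuous, because $\xi(g)^{-1}\xi(h)=\xi(g^{-1}h)$ lets one take $V:=\xi^{-1}(W)$. Hence $\phi|_G$ carries $T_A$-Cauchy nets in $G$ to $T_A$-Cauchy nets in $H$, whose limits lie in the $T_A$-closed monoid $N$; this yields the continuous extension $\tilde\phi:M\to\overline{H}\le N$, and the same argument applied to $\phi^{-1}|_H$ produces the continuous inverse. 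This is essentially the content of Proposition~11 of \cite{BodPinPon17}, whose analogue you would need to supply rather than gesture at. Two further minor inaccuracies in Phase~3: $\tilde\psi$ is a priori defined only on $\overline{H}$, so one gets $\tilde\phi\circ\tilde\psi=\mathrm{id}_{\overline{H}}$ rather than $\mathrm{id}_N$ (the equality $\overline{H}=N$ only emerges after $\phi=\tilde\phi$ is established and the surjectivity of $\phi$ is used); and once $\phi=\tilde\phi$ is known, continuity of $\phi^{-1}=\tilde\psi$ is already in hand, so the closing ``symmetric argument'' is redundant.
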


\section{Main results}
Before being able to formulate the main results of this paper, we need to introduce a few more notions from model theory.

Let $\lambda$ be any cardinal number. Recall that a structure $\bA$ is called \emph{$\lambda$-saturated} if
\[ \forall \kappa<\lambda\,\forall\bar{a}\in A^\kappa\,\forall \bB \succeq \bA\,\forall b\in B\,\exists c\in A:(\bB,\bar{a}b)\equiv(\bA,\bar{a}c).\]
Moreover, $\bA$ is called \emph{saturated} if it is $|A|$-saturated. 

Sometimes the following characterization of saturated structures is more convenient:
\begin{lemma}[{\cite[Lemma 10.1.3]{Hod93}}]\label{elemweakhom}
	Let $\bA$ be an $L$-structure and let $\lambda$ be a cardinal. Then $\bA$ is $\lambda$-saturated if and only if for every 	$L$-structure $\bB$ holds:
	\begin{multline*}
		\forall\kappa<\lambda\,\forall \ta\in A^\kappa\,\forall\tb\in B^\kappa: \\(\bA,\ta)\equiv(\bB,\tb)\Rightarrow \forall d\in B\,\exists c\in A : (\bA,\ta c)\equiv(\bB,\tb d).
	\end{multline*}
\end{lemma}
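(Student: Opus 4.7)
The plan is to reduce the abstract characterization to the one-element extension definition via a standard elementary amalgamation. The reverse direction $(\Leftarrow)$ is immediate: if $\bB \succeq \bA$ and $\ta \in A^\kappa$, then $(\bA,\ta) \equiv (\bB,\ta)$, so applying the hypothesis to the tuples $(\ta,\ta)$ and to $d=b \in B$ yields the required $c \in A$ with $(\bA,\ta c) \equiv (\bB,\ta b)$.

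For the forward direction $(\Rightarrow)$, fix $\kappa < \lambda$, $\ta \in A^\kappa$, $\tb \in B^\kappa$ with $(\bA,\ta) \equiv (\bB,\tb)$, and a given $d \in B$. The key move is to realize $(\bB,\tb,d)$ inside some elementary extension of $\bA$ so that the saturation hypothesis can be applied. Using the method of diagrams, I would form the first-order theory
\[
T := \operatorname{eldiag}(\bA) \;\cup\; \operatorname{eldiag}(\bB)[\tb \mapsto \ta],
\]
where the elements of $B \setminus \tb$ are represented by fresh constants and each constant naming an entry of $\tb$ is replaced by the corresponding constant for $\ta$. By a standard finite-satisfiability argument using $(\bA,\ta) \equiv (\bB,\tb)$, $T$ is consistent; any model of $T$ restricts to an elementary extension $\bA' \succeq \bA$ together with an elementary embedding $g \colon \bB \to \bA'$ satisfying $g(\tb) = \ta$.

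Now set $b := g(d) \in A'$. Since $\bA$ is $\lambda$-saturated and $\kappa < \lambda$, the definition applied to $\bA' \succeq \bA$, to the tuple $\ta$, and to $b$ produces some $c \in A$ with $(\bA',\ta\, b) \equiv (\bA,\ta\, c)$. Combining this with the elementarity of $g$, which gives $(\bB,\tb\, d) \equiv (\bA',g(\tb)\, g(d)) = (\bA',\ta\, b)$, yields $(\bA,\ta\, c) \equiv (\bB,\tb\, d)$, as required.

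The only non-routine step is the construction of the common elementary extension $\bA'$ with $g(\tb) = \ta$, but this is the usual elementary amalgamation consequence of compactness applied to the combined elementary diagrams, so the main obstacle is essentially bookkeeping rather than a genuine difficulty; once $\bA'$ and $g$ are in hand, the rest is a direct application of the hypothesis that $\bA$ is $\lambda$-saturated in the sense of the original definition.
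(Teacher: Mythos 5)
Your argument is correct and is in fact the standard proof. The paper does not reprove this lemma---it simply cites it as Lemma 10.1.3 of Hodges---and Hodges' own proof runs exactly along the lines you describe: the backward direction by specializing $\bB$ to an elementary extension and $\tb$ to $\ta$, and the forward direction by the Elementary Amalgamation Theorem (compactness on the combined elementary diagrams with the two distinguished tuples identified) to produce $\bA'\succeq\bA$ and an elementary embedding $g\colon\bB\to\bA'$ with $g(\tb)=\ta$, after which the one-point extension definition of $\lambda$-saturation applied to $g(d)\in A'$ finishes the job. Two small points worth being aware of when writing this up carefully: first, $\kappa$ may be infinite, so $\ta$ may be an infinite tuple; the compactness argument still works because each individual sentence of the combined diagram mentions only finitely many of the new constants, but it is worth stating this explicitly rather than leaving it implicit in the phrase ``standard finite-satisfiability argument.'' Second, you should note that the map $g$ produced by the amalgamation is indeed elementary (not merely an embedding) precisely because you used the elementary diagram of $\bB$ rather than its ordinary diagram; this is what licenses the step $(\bB,\tb\,d)\equiv(\bA',\ta\,g(d))$. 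With those remarks, the proof is complete and matches the cited source.
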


Saturated structures are rich in symmetries: Every saturated structure $\bA$ is strongly elementarily $|A|$-homogeneous. Here we call a structure $\bM$ \emph{strongly elementarily $\lambda$-homogeneous} if for all $\kappa<\lambda$ and for all $\ta, \tb\in M^\kappa$ with  $(\bM,\ta)\equiv(\bM,\tb)$, there exists an automorphism of $\bM$ that maps $\ta$ to $\tb$.

Consequently, if $\bA$ is a countable saturated structure then its automorphism group lies dense in the monoid of elementary self-embeddings of $\bA$ (from here on, we will denote this monoid by $\EEmb(\bA)$, while the monoid of self-embeddings will be denoted by $\Emb(\bA)$).

The first result of this paper is:
\begin{theorem}\label{authomeom}
	Let $\bA$ be a countable saturated structure such that $\Aut(\bA)$ has automatic homeomorphicity, and such that $\Aut(\bA)$ has a trivial center. Then $\EEmb(\bA)$ has automatic homeomorphicity, too.
\end{theorem}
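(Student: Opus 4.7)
My plan is to apply Proposition~\ref{bppcrit} to $M:=\EEmb(\bA)$ and its group of units $G:=\Aut(\bA)$. I would first verify the hypotheses of the proposition: $M$ is closed in $T_A$ because elementarity of $f\colon A\to A$ is a conjunction, over finite tuples $\ta$ and formulas $\phi$, of the pointwise-closed conditions $\bA\models\phi(\ta)\Leftrightarrow\bA\models\phi(f(\ta))$; density of $G$ in $M$ follows from saturation, because for any $f\in M$ and $\ta\in A^{<\omega}$ elementarity gives $(\bA,\ta)\equiv(\bA,f(\ta))$, and strong elementary $\aleph_0$-homogeneity of the countable saturated $\bA$ then supplies $g\in G$ with $g(\ta)=f(\ta)$; and automatic homeomorphicity of $G$ is given by assumption. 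Thus the theorem reduces to the purely algebraic statement that every injective monoid endomorphism $\Phi\colon M\to M$ with $\Phi(g)=g$ for all $g\in G$ is the identity on $M$.

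Fix such a $\Phi$. Being a monoid homomorphism fixing $G$ pointwise automatically makes $\Phi$ bi-$G$-equivariant, i.e.\ $\Phi(g_1 h g_2)=g_1\Phi(h)g_2$ for all $g_1,g_2\in G$ and $h\in M$. I would next reduce the problem to the \emph{preservation claim}:
\[
  \Phi(M_\ta)\subseteq M_\ta\quad\text{for every finite }\ta\in A^{<\omega},
\]
where $M_\ta:=\{h\in M\mid h(\ta)=\ta\}$. The reduction is short: for arbitrary $f\in M$ and finite $\ta$, saturation gives $g\in G$ with $g(\ta)=f(\ta)$, so $g^{-1}f\in M_\ta$; bi-equivariance yields $\Phi(g^{-1}f)=g^{-1}\Phi(f)$, and the preservation claim then forces $g^{-1}\Phi(f)\in M_\ta$, i.e., $\Phi(f)(\ta)=g(\ta)=f(\ta)$. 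Taking $\ta$ to range over singletons gives $\Phi(f)=f$.

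Proving the preservation claim is the core of the proof. A first observation is that $G_\ta$ is dense in $M_\ta$ (the same saturation argument, now carried out inside the expansion $(\bA,\ta)$). Fix $h\in M_\ta$ and set $\tc:=\Phi(h)(\ta)$. Elementarity of $\Phi(h)\in M$ gives $(\bA,\ta)\equiv(\bA,\tc)$, so there is $g_0\in G$ with $g_0(\ta)=\tc$; the goal is to show $g_0\in G_\ta$, equivalently $\tc=\ta$. Bi-equivariance yields $\Phi(ghg^{-1})(\ta)=g(\tc)$ for every $g\in G_\ta$, and injectivity of $\Phi$ together with $\Phi\restr G=\operatorname{id}_G$ yields the centraliser-preservation identity $C_G(h)=C_G(\Phi(h))$. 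My plan for the final step would be to use these two constraints, together with density of $G_\ta$ in $M_\ta$, to extract an element of $G$ that centralises every element of $G$, so that $Z(G)=\{e\}$ forces $\tc=\ta$.

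The chief obstacle is this last rigidity step. The difficulty is that bi-$G$-equivariance alone only controls $\tc$ up to the action of a subgroup of $G$ that contains $G_\ta$; in the presence of nontrivial model-theoretic algebraic closure, the $G_\ta$-orbit of $\tc$ can fail to force $\tc=\ta$ on its own. So the argument has to fuse bi-$G$-equivariance of $\Phi$ with the automatic homeomorphicity of $G$---passing from algebraic identities between $\Phi$-values to topological ones via approximation of $h$ by elements of $G_\ta$---and only then invoke triviality of $Z(G)$ to kill the remaining ambiguity. This interplay between the topological input coming from automatic homeomorphicity of $G$ and the algebraic input from $Z(G)=\{e\}$ is where the real technical work of the proof must live.
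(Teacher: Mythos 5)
Your reduction via Proposition~\ref{bppcrit} matches the paper exactly: you correctly observe that $\EEmb(\bA)$ is closed, that $\Aut(\bA)$ is dense in it by saturation and strong elementary homogeneity, and that the theorem therefore reduces to the purely algebraic claim that every injective monoid endomorphism $\Phi$ of $\EEmb(\bA)$ fixing $\Aut(\bA)$ pointwise is the identity. The paper isolates exactly this claim (in a slightly stronger form, without the injectivity hypothesis) as Proposition~\ref{endstab}, and the derivation of Theorem~\ref{authomeom} from Propositions~\ref{bppcrit} and~\ref{endstab} is the same one-liner you have.

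Where your attempt departs from the paper---and where, as you honestly flag, it has a genuine gap---is in the proof of this algebraic claim. Your strategy of proving preservation of point stabilizers $\Phi(M_\ta)\subseteq M_\ta$ and deducing $\Phi=\mathrm{id}$ is a sound reduction, but you give no argument for the preservation claim itself, and the proposed remedy of ``fusing bi-$G$-equivariance with automatic homeomorphicity'' misreads the architecture: in the paper, automatic homeomorphicity of $\Aut(\bA)$ is consumed \emph{only} inside Proposition~\ref{bppcrit}, while Proposition~\ref{endstab} is proved by purely algebraic and model-theoretic means with no topological input beyond density. Concretely, the paper introduces \emph{superhomogeneous substructures} and proceeds in two steps: (i) using the operators $f^*$, $I(f)$, $B(f)$ and the triviality of the centre (Lemmas~\ref{xstar}--\ref{Iimfeq}, Proposition~\ref{threeplus}), it shows that any endomorphism of $\Emb(\bA)$ fixing $\Aut(\bA)$ pointwise also fixes every self-embedding whose image is superhomogeneous (Proposition~\ref{superhomfix}); (ii) via a back-and-forth construction inside a large elementary (``$\aleph_1$-big'') extension, it shows every self-embedding $h$ admits a factorization $\alpha=\beta\circ h$ with $\alpha,\beta$ having superhomogeneous images (Propositions~\ref{alphabeta},~\ref{alphabetaex}), whence $\beta\circ h=\alpha=\iota(\alpha)=\beta\circ\iota(h)$ and injectivity of $\beta$ gives $\iota(h)=h$. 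Your proposal has no analogue of either step; in particular the ``extract an element centralising all of $G$'' route is exactly the kind of argument that fails in the presence of nontrivial algebraicity, which is precisely the obstruction that forced the paper to pass to superhomogeneous images, where the pointwise stabilizer of the image in $\Aut(\bA)$ is rich enough to pin things down. So the reduction is right, but the core of the theorem is missing.
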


Examples for countable saturated structures whose automorphism group has a trivial center are countable saturated structures with no algebraicity (cf. \cite[7.1]{Hod93}). It is well-known that all countable $\aleph_0$-categorical structures are saturated. 

More concrete examples can be found in the world of countable homogeneous structures:  Recall that  a structure $\bU$ is called \emph{homogeneous} if every embedding of a finitely generated substructure of $\bU$ into $\bU$ extends to an automorphism of $\bU$ (the embeddings of finitely generated substructures of $\bU$ into $\bU$ are commonly called \emph{local isomorphisms} of $\bU$). The class of finitely generated structures embeddable into $\bU$ is called \emph{age} of $\bU$ and is denoted by $\Age(\bU)$. A classical theorem by \Fraisse{} states that each countable homogeneous structure is determined up to isomorphism by its age. Moreover, a class of finitely generated structures of the same type is the age of a countable homogeneous structure if and only if it splits into countably many isomorphism classes, it has the hereditary property (HP), the joint embedding property (JEP), and the amalgamation property (AP) (cf. \cite[7.1]{Hod93}).  

For countable  saturated homogeneous structures, we can read off the age of the structure whether it has algebraicity, or not. In particular, a countable saturated homogeneous structure has  no algebraicity if and only if its age has the \emph{strong amalgamation property} (SAP) (cf. \cite[Theorem 7.1.8]{Hod93}). 

To put the finger onto some concrete structures, the following is a list of countable homogeneous structures, for which Theorem~\ref{authomeom} delivers that the monoid of (elementary) self-embeddings has automatic homeomorphicity (for each new example we give  a reference, where the small index property of the respective automorphism group was proved):
\begin{itemize}
	\item all the examples from \cite{BodPinPon17} and \cite{BehTruVar17} (i.e., $(\bN,=)$, the Rado graph, the countable universal homogeneous digraph, $(\bQ,\le)$,\dots),
	\item the countable atomless Boolean algebra (\cite{Tru89}),
	\item the rational Urysohn space and the rational Urysohn sphere (\cite{Sol05},\cite{KecRos07}),
	\item  $\omega$-stable, $\omega$-categorical structures whose automorphism groups have a trivial center (\cite{HodHodLasShe93}).
	\item the Henson graphs and the Henson digraphs (\cite{Her98}),
	\item the edge-colored random graphs with countably many colors (\cite{DolMas12}).
\end{itemize}

Now that we talked about positive examples, let us have a look onto the side of negative examples. Evans and Hewitt in \cite{EvaHew90}, basing on a construction by Hrushovski,  gave an example of two countable $\aleph_0$-categorical structures whose automorphism groups are isomorphic as abstract groups but not as topological groups. Consequently, non of the two automorphism groups has automatic homeomorphicity.

Recently, Bodirsky, Evans, Kompatscher, and Pinsker in \cite{BodEvaKomPin15}, showed for  these structures, that their monoids of elementary self-embeddings are isomorphic as abstract monoids, but not as topological monoids. In other words, neither of the monoids has automatic homeomorphicity. In some sense, their example maximally violates the conditions of Theorem~\ref{authomeom} in that none of the automorphism groups has automatic homeomorphicity, nor do they have a trivial center. 

Coming back to the positive side, we would like to mention a classic result by Lascar (here stated in a slightly weaker form):
\begin{theorem}[{\cite[Proposition principale]{Las89}}]\label{lascar}
	Let $\bA$ be a countable $G$-finite, $\aleph_0$-categorical structure, and let $\bB$ be another countable structure. Let $\varphi$ be a monoid isomorphism from $\EEmb(\bA)$ to $\EEmb(\bB)$. Then $\varphi\restr_{\Aut(\bA)}$ is a topological isomorphism from $\Aut(\bA)$ to $\Aut(\bB)$.  
\end{theorem}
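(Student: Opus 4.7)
Since $\varphi$ is a monoid isomorphism it maps units to units, and the units of $\EEmb(\bX)$ for any countable structure $\bX$ are exactly $\Aut(\bX)$ (an invertible elementary self-embedding is surjective). Hence $\psi := \varphi\restr_{\Aut(\bA)}$ is automatically a group isomorphism onto $\Aut(\bB)$, and only its continuity---in both directions---is at stake. Both $\Aut(\bA)$ and $\Aut(\bB)$ are Polish, as closed subgroups of $\Sym(\bN)$, so by the open mapping theorem for Polish groups it suffices to prove that $\psi$ itself is continuous. As $\bA$ is $\aleph_0$-categorical, the pointwise stabilizers $G_{\ta}$ of finite tuples $\ta\in A^{<\omega}$ form a neighborhood basis of the identity in $\Aut(\bA)$, so the task reduces to showing that each $\psi(G_{\ta})$ is open in $\Aut(\bB)$.

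The key gadget is the following monoid-theoretic construction of open subgroups. For $f\in\EEmb(\bA)$ set
\[
  M(f) := \{\,g\in\Aut(\bA) : gf = fh \text{ for some } h\in\EEmb(\bA)\,\},
\]
which coincides with the setwise stabilizer in $\Aut(\bA)$ of $f(A)\subseteq A$. Whenever $A\setminus f(A)$ is finite---i.e., $f$ has finite \emph{codimension}---the subgroup $M(f)$ is open in $\Aut(\bA)$. Finite codimension is itself a monoid-theoretic condition, since it is equivalent to finite boundedness of the lengths of chains of strict inclusions of principal right ideals $f\EEmb(\bA) = I_0 \subsetneq I_1 \subsetneq\cdots\subsetneq I_k = \EEmb(\bA)$; and $M(f)$ is defined using only monoid multiplication and the already identified subgroup of units. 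Hence $\varphi$ preserves both notions, so $\psi(M(f))=M(\varphi(f))$ is the setwise stabilizer of a finite subset of $B$, and therefore open in $\Aut(\bB)$.

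Using $\aleph_0$-categoricity together with $G$-finiteness---absorbing the finite set $\operatorname{dcl}(\emptyset)$ into $\ta$ if needed, which does not alter $G_{\ta}$---one shows that $A\setminus\operatorname{acl}(\ta)$ is an elementary substructure of $\bA$; $\aleph_0$-categoricity then supplies an elementary self-embedding $f_{\ta}\in\EEmb(\bA)$ with image $A\setminus\operatorname{acl}(\ta)$ of finite codimension $|\operatorname{acl}(\ta)|$. Then $M(f_{\ta})$ is the setwise stabilizer of $\operatorname{acl}(\ta)$, and $G_{\ta}=G_{\operatorname{acl}(\ta)}$ (by $G$-finiteness, which forces $\operatorname{acl}=\operatorname{dcl}$) is a finite-index subgroup of $M(f_{\ta})$, since the quotient embeds into $\operatorname{Sym}(\operatorname{acl}(\ta))$. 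Consequently $\psi(G_{\ta})$ is a finite-index subgroup of the open subgroup $M(\varphi(f_{\ta}))$ of the Polish group $\Aut(\bB)$, and by a Baire-category argument---finite-index subgroups of Polish groups are always open---$\psi(G_{\ta})$ is open in $\Aut(\bB)$, completing the proof.

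The main technical obstacle lies in the first sentence of the previous paragraph: the Tarski--Vaught verification that $A\setminus\operatorname{acl}(\ta)$ is an elementary substructure of $\bA$. This is exactly where $G$-finiteness does decisive work, preventing formulas with parameters outside $\operatorname{acl}(\ta)$ from cutting out singleton solution sets inside $\operatorname{acl}(\ta)$---the only obstruction to Tarski--Vaught---and thereby guaranteeing the supply of finite-codimension witnesses $f_{\ta}$ on which the whole argument rests.
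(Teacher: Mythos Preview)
The paper does not prove this theorem; it is quoted from Lascar \cite{Las89} and used as a black box in the proof of Theorem~\ref{genLascar}. So there is no in-paper argument to compare against, and your sketch must be judged on its own.

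Your outline follows what one expects from Lascar's strategy---recover basic open subgroups of $\Aut(\bA)$ from purely monoid-theoretic data in $\EEmb(\bA)$---but several steps are either unjustified or wrong as stated. First, the assertion that $G$-finiteness forces $\operatorname{acl}=\operatorname{dcl}$ is false: for $\bA$ a pure equivalence relation with all classes of size $3$, one checks that $\Aut(\bA)\cong S_3\wr\Sym(\omega)$ has no proper open subgroup of finite index (so $\bA$ is $G$-finite in the paper's sense), yet $\operatorname{acl}(a)$ is the whole class of $a$ while $\operatorname{dcl}(a)=\{a\}$. This particular step is salvageable---$G_{\ta}$ has finite index in $M(f_{\ta})$ simply because both contain $G_{\operatorname{acl}(\ta)}$ with finite index---so the error is local. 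Second, and more seriously, your final move ``finite-index subgroups of Polish groups are always open'' is false without a measurability or Baire-property hypothesis on the subgroup, and $\psi(G_{\ta})$ is a priori just an abstract subgroup of $\Aut(\bB)$; Pettis' lemma does not apply. Third, the chain characterisation of finite codimension requires that every $f$ of infinite codimension factor as a product of $k$ non-units for every $k$, which you do not establish. Finally, the Tarski--Vaught verification you flag as ``the main technical obstacle'' is not actually carried out, and your description of how $G$-finiteness enters (``preventing\dots singleton solution sets'') addresses only part of the obstruction, since any nonempty definable subset of $\operatorname{acl}(\ta)$, not just singletons, would block Tarski--Vaught.
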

Here, a countable $\aleph_0$-categorical structure $\bA$ is called \emph{$G$-finite} if its automorphism group has a smallest closed subgroup of finite index. Note that the examples from \cite{BodEvaKomPin15} are not $G$-finite. 

The second  result of this paper is going to be a strengthening of Lascar's result (with a slight restriction of generality):
\begin{theorem}\label{genLascar}
	Let $\bA$ be a countable $G$-finite, $\aleph_0$-categorical structure whose automorphism group has a trivial center, and let $\bB$ be another countable structure. Let $\varphi$ be a monoid isomorphism from $\EEmb(\bA)$ to $\EEmb(\bB)$. Then $\varphi$ is a homeomorphism.
\end{theorem}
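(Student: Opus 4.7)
My plan is to reduce Theorem \ref{genLascar} to Theorem \ref{authomeom}, which already handles the reconstruction question for elementary self-embedding monoids of countable saturated structures. I begin by verifying the hypotheses of Theorem \ref{authomeom} for $\bA$. Countable saturation follows from $\aleph_0$-categoricity (a classical fact), and the trivial-center condition on $\Aut(\bA)$ is part of our hypothesis. The remaining ingredient is automatic homeomorphicity of $\Aut(\bA)$ itself. Under the $G$-finite, $\aleph_0$-categorical assumptions this is implicit in Lascar's work: his arguments in fact produce the small index property for $\Aut(\bA)$, and hence automatic homeomorphicity with respect to all closed permutation groups of countable degree, not merely the restricted statement quoted in Theorem \ref{lascar}.

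Once Theorem \ref{authomeom} is applied, $\EEmb(\bA)$ has automatic homeomorphicity with respect to the class of closed transformation monoids on a countable set. To finish I must observe that $\EEmb(\bB)$ is itself such a monoid. For each first-order formula $\phi(\bar x)$ and each tuple $\bar b$ from $B$, preserving the truth-value of $\phi(\bar b)$ is a pointwise (hence clopen) condition in the product topology on $B^B$; intersecting these conditions together with injectivity shows that $\EEmb(\bB)$ is closed. Choosing any bijection $B \to A$ transfers $\EEmb(\bB)$ to a closed submonoid of $A^A$ via conjugation, and that conjugation is simultaneously a monoid- and topology-preserving bijection.

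Under this identification, the given monoid isomorphism $\varphi$ becomes an abstract isomorphism from $\EEmb(\bA)$ onto a closed transformation monoid on $A$, hence a homeomorphism by the automatic homeomorphicity of $\EEmb(\bA)$; unconjugating shows that $\varphi$ itself is a homeomorphism. The main obstacle is the first step: one must verify that Lascar's framework really yields automatic homeomorphicity of $\Aut(\bA)$ in the full generality needed, that is, for arbitrary abstract isomorphisms with closed permutation groups of countable degree, not only for isomorphisms induced on $\Aut(\bA)$ by abstract isomorphisms of $\EEmb(\bA)$. Once this is secured, the remainder of the argument is largely formal.
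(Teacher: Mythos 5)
Your plan hinges on the claim that Lascar's work already gives automatic homeomorphicity of $\Aut(\bA)$ for $G$-finite $\aleph_0$-categorical $\bA$ (via the small index property). That claim is false as stated: this is exactly Lascar's \emph{conjecture} from \cite{Las91}, which remains open. The paper flags this explicitly right after Theorem~\ref{genLascar}: ``Lascar \emph{conjectured} in \cite{Las91}, that the automorphism groups of all $\aleph_0$-categorical $G$-finite structures have the small index property.'' What Lascar actually proved (Theorem~\ref{lascar}) is much weaker: the restriction to $\Aut(\bA)$ of a \emph{monoid} isomorphism $\EEmb(\bA)\to\EEmb(\bB)$ is a homeomorphism $\Aut(\bA)\to\Aut(\bB)$. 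This is not the statement that every abstract group isomorphism from $\Aut(\bA)$ to a closed permutation group is a homeomorphism. You yourself identify this as the ``main obstacle'' at the end, but your attempted resolution --- asserting that Lascar's arguments deliver the small index property --- does not hold up.

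Because of this, your route through Theorem~\ref{authomeom} cannot be carried out with the given hypotheses; you would need to add the (unproved) automatic homeomorphicity of $\Aut(\bA)$ as an extra assumption, at which point you would just be re-proving Theorem~\ref{authomeom}. The paper's proof avoids the gap entirely: it applies Lascar's actual theorem to the restriction $\psi:=\varphi\restr_{\Aut(\bA)}$ to get a homeomorphism $\Aut(\bA)\to\Aut(\bB)$, extends $\psi$ canonically to a homeomorphism $\overline\psi:\EEmb(\bA)\to\overline{\Aut(\bB)}$ (using \cite[Proposition 11]{BodPinPon17}), and then observes that $h:=\varphi^{-1}\circ\overline\psi$ is an endomorphism of $\EEmb(\bA)$ fixing $\Aut(\bA)$ pointwise. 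Proposition~\ref{endstab} forces $h=\operatorname{id}$, hence $\varphi=\overline\psi$ is a homeomorphism. So the crucial structural point is that one only ever needs continuity reconstruction for group isomorphisms that \emph{arise as restrictions of monoid isomorphisms}, and Lascar's theorem supplies exactly that. Your closing paragraph about $\EEmb(\bB)$ being closed and transferring along a bijection is fine but is not the missing ingredient.
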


The proofs of  Theorem~\ref{authomeom} and Theorem~\ref{genLascar} will hinge on the following proposition:
\begin{proposition}\label{endstab}
	Let $\bA$ be a countable saturated structure such that $\Aut(\bA)$ has a trivial center. Then every endomorphism of $\EEmb(\bA)$ that fixes $\Aut(\bA)$ element-wise, is the identity on $\EEmb(\bA)$. 
\end{proposition}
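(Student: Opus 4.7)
My strategy is a density reduction to a stabilizer-preservation statement, which I would then settle by a centralizer characterization exploiting the trivial-center hypothesis.

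First, because $\bA$ is countable and saturated, Lemma~\ref{elemweakhom} applied to $(\bA,\ta)\equiv(\bA,f(\ta))$ produces, for every $f\in\EEmb(\bA)$ and every finite tuple $\ta\in A^{<\omega}$, an automorphism $g\in\Aut(\bA)$ with $g(\ta)=f(\ta)$. Setting $e:=g^{-1}f\in\EEmb(\bA)$, the map $e$ fixes $\ta$ pointwise, and from $\Phi(g)=g$ we obtain $\Phi(f)=g\cdot\Phi(e)$, whence
\[ \Phi(f)(\ta) \;=\; g\bigl(\Phi(e)(\ta)\bigr). \]
If $\Phi(e)(\ta)=\ta$, the right-hand side equals $g(\ta)=f(\ta)$; and since $\ta$ can be chosen to exhaust the countable set $A$, we conclude $\Phi(f)=f$. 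Thus the proposition reduces to
\[ (\star)\quad e\in\EEmb(\bA),\ e(\ta)=\ta \;\Longrightarrow\; \Phi(e)(\ta)=\ta. \]

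The key algebraic input for proving $(\star)$ is the following centralizer characterization, which is where the trivial-center hypothesis enters:
\[ (\dagger)\quad \alpha\in\EEmb(\bA),\ \alpha g=g\alpha\ \text{for every }g\in\Aut(\bA) \;\Longrightarrow\; \alpha=\operatorname{id}_A. \]
To see $(\dagger)$: such an $\alpha$ is $\Aut(\bA)$-equivariant on $A$; on each orbit $\Aut(\bA)\cdot a$, injectivity plus equivariance force $\alpha$ to act as a bijection (corresponding to a class in $N_{\Aut(\bA)}(\Aut(\bA)_a)/\Aut(\bA)_a$). Hence $\alpha$ is an elementary bijection of $\bA$, so an automorphism; being central in $\Aut(\bA)$, it lies in $Z(\Aut(\bA))=\{1\}$, giving $\alpha=\operatorname{id}_A$. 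To then derive $(\star)$, given $e\in\EEmb(\bA)$ with $e(\ta)=\ta$ and $\alpha:=\Phi(e)$, one exploits the relations $\Phi(heh^{-1})=h\alpha h^{-1}$ for $h\in\Aut(\bA)$ and their multi-factor analogues to build an element of $\EEmb(\bA)$ that encodes the discrepancy between $\alpha$ and $e$ on $\ta$ and commutes with every $g\in\Aut(\bA)$. By $(\dagger)$, that element is $\operatorname{id}_A$, which on unwinding forces $\alpha(\ta)=\ta$.

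The principal obstacle lies in this last construction: manufacturing, from $(e,\alpha,\ta)$, a genuine $\Aut(\bA)$-centralizing element of $\EEmb(\bA)$ whose collapse to the identity is equivalent to $\alpha(\ta)=\ta$. Saturation is essential here---to simultaneously realize the many types needed for the intermediate products to stay inside $\EEmb(\bA)$---and this combinatorial step is where the bulk of the technical effort will be spent; everything else is essentially formal.
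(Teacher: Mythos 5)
Your density reduction to $(\star)$ is sound, and your centralizer lemma $(\dagger)$ is a correct and elegant observation: equivariance of an injective endomorphism forces a bijection on each orbit, a bijective elementary self-embedding is an automorphism, and the trivial center finishes it. However, the proof stops exactly where you say it will: the bridge from $(\dagger)$ to $(\star)$ is not constructed, and I do not believe the envisioned construction exists. From $e(\ta)=\ta$ and $\alpha:=\Phi(e)$, the only information $\Phi$ transmits is of the form ``if $g e = e h$ for $g,h\in\Aut(\bA)$, then $g\alpha=\alpha h$.'' That is a \emph{twisted} centralizer condition, not a genuine centralizer condition, and for a generic $e$ the set of such pairs $(g,h)$ can be far too small to pin down $\alpha$ on $\ta$. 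There is no natural candidate for ``an element of $\EEmb(\bA)$ encoding the discrepancy between $\alpha$ and $e$ on $\ta$ that commutes with all of $\Aut(\bA)$''; conjugation relations $\Phi(heh^{-1})=h\alpha h^{-1}$ only constrain $\alpha$ via the centralizer of $e$ in $\Aut(\bA)$, which in general is tiny.

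The paper's route is quite different and supplies precisely the machinery your outline lacks. It first passes to a \emph{smooth} presentation (countable signature, quantifier elimination), then introduces the twisted centralizer $f^*=\{(\alpha,\beta):\alpha f = f\beta\}$ and the class of self-embeddings whose image is \emph{superhomogeneous}. For such $f$, the set $B(f)$ of second coordinates in $f^*$ is dense in $\Aut(\bA)$, which is exactly what lets the trivial-center hypothesis bite (an element commuting with a dense subgroup is central); from $h^*=(\iota(h))^*$ one deduces $\im(\iota(h))=\im(h)$ and then $\iota(h)=h\zeta$ with $\zeta$ central, hence $\iota(h)=h$. The saturation hypothesis is not merely useful---it is spent on a highly non-trivial existence result (building, inside a sufficiently big elementary extension, a copy of $\bA$ in which both $\bA$ and a prescribed image are simultaneously superhomogeneous), and then on a factorization $\alpha=\beta\circ h$ through superhomogeneous embeddings that lets one cancel $\beta$ and conclude $h=\iota(h)$. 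None of these ingredients (superhomogeneous substructures, $f^*$, the $\aleph_1$-big extension, the factorization) appear in your proposal, so the ``technical effort'' you defer is not a matter of filling in routine combinatorics: it is the entire content of the theorem. Your $(\dagger)$ is related in spirit to the paper's Proposition~\ref{threeplus}, but without the superhomogeneity/density apparatus there is no way to manufacture the centralizing element that $(\dagger)$ would consume.
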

Once this proposition will be proved, the proof of Theorem~\ref{authomeom} is immediate, using Proposition~\ref{bppcrit}.

The proof of Theorem~\ref{genLascar} goes roughly in the same way as the proof of Proposition~\ref{bppcrit}:
\begin{proof}[Proof of Theorem~\ref{genLascar}]
	Let $\psi:=\varphi\restr_{\Aut(\bA)}$. By Theorem~\ref{lascar} we have that  $\psi\colon\Aut(\bA)\to\Aut(\bB)$ is a homeomorphism. It is well-known (cf.~\cite[Proposition 11]{BodPinPon17}) that $\psi$ extends to a homeomorphism $\overline\psi\colon \EEmb(\bA)\to\overline{\Aut(\bB)}\le\EEmb(\bB)$.  
	 Let $h:=\varphi^{-1}\circ\overline\psi$. Then $h$ is an endomorphism of $\EEmb(\bA)$ that fixes $\Aut(\bA)$ pointwise. Hence, by Proposition~\ref{endstab}, $h$ is the identity endomorphism. It follows that $\varphi$ is equal to $\overline\psi$. In particular, $\varphi$ is a homeomorphism.
\end{proof}

Note that in contrast to Theorem ~\ref{authomeom}, Theorem~\ref{genLascar} seems not to  assume automatic homeomorphicity for the automorphism group of the structure $\bA$. However, this assumption might just be hidden. Lascar conjectured in \cite{Las91}, that the automorphism groups of all  $\aleph_0$-categorical $G$-finite structures have the small index property and hence automatic homeomorphicity. 

In order to prove Proposition~\ref{endstab}, we will use a strategy that was established in \cite{BodPinPon17}, and adapted in \cite{BehTruVar17}. It goes as follows: First a class of special elements of $\EEmb(\bA)$ is identified. Then it is shown that the special elements are fixed by every endomorphism of $\EEmb(\bA)$ that fixes the automorphisms of $\bA$. Finally it is shown that the image of the non-special elements is determined by the images of the special elements. In \cite{BodPinPon17}, the special elements were self-embeddings with rich and co-rich image. In \cite{BehTruVar17}, the special self-embeddings of $(\bQ,\le)$ are those self-embeddings with a maximally spread out image. In the next section we are going to introduce a new class of special morphisms: self-embeddings with superhomogeneous image. These are are generalizations of self-embeddings with maximally spread out image and are somewhat related to the conjugable endomorphisms introduced in \cite{Bar2015}.

The following easy observation about saturated structures will simplify many considerations in the sequel of this paper:
\begin{lemma}\label{smooth}
	Let $\bM$ be a countable saturated structure. Then there exists a  saturated structure $\widetilde\bM$ on the same set $M$ over a countable signature $L$, such that $\Aut(\bM)=\Aut(\widetilde\bM)$ and such that $\Th(\widetilde\bM)$ has quantifier-elimination. 
\end{lemma}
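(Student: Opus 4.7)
The plan is a Morleyization, organized so that the enlarged signature remains countable.  Write $L_0$ for the signature of $\bM$, which we may assume to be countable, and put $T := \Th(\bM)$.  Since $\bM$ is countable and saturated, every complete $n$-type over the empty set is realized in $\bM$, so $|S_n(T)| \le |M|^n = \aleph_0$ for each $n$.  Because $L_0$ is countable, there are only countably many $L_0$-formulas, and hence the Lindenbaum algebra $B_n(T)$ of $L_0$-formulas in $n$ free variables modulo $T$-equivalence is also countable for every $n$.

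For each $n$ I would choose a set $F_n$ of representatives of the $T$-equivalence classes in $B_n(T)$, and for each $\varphi \in F_n$ introduce a fresh $n$-ary relation symbol $R_\varphi$; then $L := \{R_\varphi \mid \varphi \in \bigcup_n F_n\}$ is a countable signature.  Define $\widetilde{\bM}$ on the same universe $M$ by setting $R_\varphi^{\widetilde{\bM}} := \varphi^{\bM}$ for every $\varphi \in \bigcup_n F_n$.  Since each $R_\varphi^{\widetilde{\bM}}$ is $L_0$-definable in $\bM$, every automorphism of $\bM$ is automatically an automorphism of $\widetilde{\bM}$.  Conversely, every atomic $L_0$-formula is $T$-equivalent to some $\varphi \in F_n$, so each basic symbol of $L_0$ is interpreted in $\bM$ by some $R_\varphi^{\widetilde{\bM}}$; hence every automorphism of $\widetilde{\bM}$ is also an automorphism of $\bM$, and therefore $\Aut(\bM) = \Aut(\widetilde{\bM})$.

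Quantifier elimination is then essentially by construction: for any $L$-formula $\psi(\bar x)$, replacing each occurrence of $R_\varphi$ by $\varphi$ yields an $L_0$-formula $\psi^{\ast}(\bar x)$ that is $T$-equivalent to some representative $\varphi_0 \in F_n$, so $\Th(\widetilde{\bM}) \vdash \psi \leftrightarrow R_{\varphi_0}$, an atomic $L$-formula.  Saturation of $\widetilde{\bM}$ is inherited from that of $\bM$: using QE together with the translation $R_\varphi \leftrightarrow \varphi$, the complete $n$-types of $\widetilde{\bM}$ over any finite parameter set biject naturally with those of $\bM$ over the same set, and have the same realizations in $M$.

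The only slightly delicate point in the argument is keeping $L$ countable; this rests on the observation that a countable saturated structure realizes every type in its universe, so $|S_n(T)|$ and in turn the Lindenbaum algebras $B_n(T)$ are countable, and hence $\bM$ admits only countably many distinct definable relations of each arity.  All remaining verifications are routine.
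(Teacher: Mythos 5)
Your overall strategy---Morleyization---is essentially the same as the paper's, and the routine parts (quantifier elimination by construction, preservation of $\Aut$ and of saturation under a definitional expansion/reduct) are handled correctly. The gap is in the opening move: you write that the original signature $L_0$ "may be assumed countable," but this is not part of the hypothesis, and reducing to a countable signature is in fact the one non-routine ingredient of the lemma. The paper handles it by passing to a definitionally equivalent structure over a countable language, citing \cite[Exercise 10.2.20]{Hod93}. Your closing paragraph gestures at the needed justification---$|S_n(T)|\le\aleph_0$ by saturation, "and in turn the Lindenbaum algebras $B_n(T)$ are countable"---but the inference compressed into "in turn" is exactly the nontrivial point: it amounts to the claim that a Boolean algebra whose Stone space is countable is itself countable. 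That claim is true (for example because a countable compact Hausdorff space is second countable, so by compactness its algebra of clopen sets is countable), but it is not obvious and needs an argument or a citation; note also that your earlier derivation of the countability of $B_n(T)$ explicitly used the countability of $L_0$, the very thing in question, so the two justifications you offer are circular with one another. Once the reduction to a countable signature is properly supplied, the rest of your proof is correct and matches the paper's.
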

\begin{proof}
	It is easy to see that $\bM$ is definitionally equivalent to a structure $\bM'$ over a countable language (cf. \cite[Exercise 10.2.20]{Hod93}). Now we obtain $\widetilde\bM$ by expanding $\bM'$ with all definable relations (there are just countably many, as the signature of $\bM'$ is countable). Clearly, the theory of $\widetilde\bM$ has quantifier elimination. Moreover, the class of $\lambda$-saturated structures is stable under definitional expansions and reducts (cf. \cite[Exercises 10.2.11,10.2.12]{Hod93}). It follows that $\widetilde\bM$ is $\aleph_0$-saturated. 
\end{proof}
We are going to call a countable saturated structure \emph{smooth} if its signature is countable and if its theory admits quantifier-elimination. By Lemma~\ref{smooth}, if we can prove Proposition~\ref{endstab} for smooth countable saturated structures, then we have shown it also in the general case. Smooth countable saturated structures have also the pleasant property to be homogeneous,  and model complete, i.e., every embedding to an elementary equivalent structure is already an elementary embedding. This is going to make our life much easier.

\section{Superhomogeneous substructures}
\begin{definition}
	Let $\bU$ be a structure. A substructure $\bV\le\bU$ is called \emph{superhomogeneous in $\bU$} if 
	\begin{enumerate}
		\item every local isomorphism of $\bV$ extends to an automorphism of $\bU$ whose restriction to $V$ is an automorphism of $\bV$, and if
		\item for all $y\in U\setminus V$ there exists some $\alpha\in\Aut(\bU)$ that fixes $V$ pointwise such that $\alpha(y)\neq y$.
	\end{enumerate}
 \end{definition}
Note how the first part of the definition assures that the substructure has enough inner symmetries, while the second condition asserts that at the same time the pointwise stabilizer of $V$ in $\Aut(\bU)$ is  sufficiently rich.

Of course, superhomogeneous substructures are always homogeneous. In our applications of this concept also the enveloping structure will be homogeneous. In fact it will usually be isomorphic to the superhomogeneous substructure. To see that we are not talking about an empty concept, let us have a look onto some examples:
\begin{example}
	The clopen intervals in $(\bQ,\le)$ are superhomogeneous in $(\bQ,\le)$. Indeed, let $a,b\in(\bR\cup\{-\infty,+\infty\})\setminus\bQ$, and let $I=(a,b)\cap \bQ$. Then $(I,\le)\cong(\bQ,\le)$ and every automorphism of $(I,\le)$ extends to an automorphism of $(\bQ,\le)$ that leaves $I$ invariant. Moreover, the pointwise stabilizer of $I$ in $\Aut(\bQ)$ moves every point outside of $I$.
\end{example}

\begin{example}
	Another example are the maximally spread out substructures of $(\bQ,\le)$ introduced in \cite{BehTruVar17} for showing that $\Emb(\bQ,\le)$ has automatic homeomorphicity: Consider the $2$-colored rationals $(\bQ_2,\le)$. They are obtained from $(\bQ,\le)$ by assigning one of two colors (say, red and blue) to every rational in such a way that the red rationals and the blue rationals both form dense unbounded chains that lie dense in each other in the sense that between any two blue rationals there is a red one and between any two red rationals there is a blue one.
	Now replace every blue rational by a blue copy of the rationals. The resulting chain is again isomorphic to the rationals. It can be checked that the red rationals are superhomogeneous in this new version of the rationals. 
\end{example}

The following proposition will make clear, why we are interested into superhomogeneous substructures:
\begin{proposition}\label{superhomfix}
	Let $\bU$ be a countable homogeneous structure such that $\Aut(\bU)$  has a trivial center. Let $\iota$ be an  endomorphism of $\Emb(\bU)$ that fixes $\Aut(\bU)$ element-wise. Let $h\in\Emb(\bU)$ such that the image of $h$ is superhomogeneous in $\bU$. Then $\iota(h)=h$.  
\end{proposition}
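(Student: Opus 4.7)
The plan is to let $h' := \iota(h)$, $V := \im h$, $V' := \im h'$, and to deduce $h' = h$ by realising $\psi := h' \circ h^{-1}$ (with $h^{-1}$ the inverse of the bijection $h\colon U\to V$) as a central element of $\Aut(\bV)$.

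I would first show $V' \subseteq V$. Every $\alpha\in\Aut(\bU)$ that fixes $V$ pointwise satisfies $\alpha h = h$, so applying $\iota$ (which fixes $\alpha$) yields $\alpha h' = h'$; hence $\alpha$ fixes $V'$ pointwise, and by condition~(2) of superhomogeneity, the common fixed-point set of the pointwise stabiliser of $V$ in $\Aut(\bU)$ is exactly $V$.

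Next I would establish $V'=V$. Any $\alpha\in\Aut(\bU)$ with $\alpha(V)=V$ satisfies $\alpha h = h\beta$ for $\beta := h^{-1}\alpha h\in\Aut(\bU)$, so applying $\iota$ gives $\alpha h' = h'\beta$, whence $\alpha(V')=V'$. By condition~(1) of superhomogeneity, the image of the restriction map $\Aut(\bU)_{\{V\}}\to\Aut(\bV)$ contains an extension of every local isomorphism of $\bV$, so it is dense in $\Aut(\bV)$; since setwise invariance is closed under pointwise limits, $V'$ is $\Aut(\bV)$-invariant. Because $h'$ is an embedding, the substructure on $V'$ is isomorphic to $\bU\cong\bV$, so $V'$ realises every quantifier-free $1$-type realised in $\bV$. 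The homogeneity of $\bV$ (also forced by condition~(1)) identifies these types with the $\Aut(\bV)$-orbits on $V$, so $V'$ meets every orbit, and $\Aut(\bV)$-invariance then forces $V'=V$.

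Finally, $\psi = h'\circ h^{-1}$ is the composition of isomorphisms $\bV\xrightarrow{h^{-1}}\bU\xrightarrow{h'}\bV$ (the second being an isomorphism precisely because $\im h'=V$), so $\psi\in\Aut(\bV)$. Substituting $v = h(u)$ into $\alpha h' = h'\beta$ gives $\alpha|_V\circ\psi = \psi\circ\alpha|_V$ on $V$, so $\psi$ centralises the dense subgroup $\{\alpha|_V : \alpha\in\Aut(\bU)_{\{V\}}\}$ of $\Aut(\bV)$; centralising is a pointwise-closed condition, so $\psi$ lies in the centre of $\Aut(\bV)$. Since $\Aut(\bV)\cong\Aut(\bU)$ has trivial centre, $\psi=\mathrm{id}_V$ and hence $h'=h$. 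The main obstacle I expect is the middle step, $V'=V$: setwise $\Aut(\bV)$-invariance alone only yields that $V'$ is a union of orbits, and one must couple it with the embedding character of $h'$ and the orbit/quantifier-free-type correspondence from the homogeneity of $\bV$ to rule out proper invariant copies.
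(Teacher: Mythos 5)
Your argument is correct, and the overall strategy---first force $\im\iota(h)=\im h$, then recognise $\iota(h)\circ h^{-1}$ as a central automorphism, then kill it with trivial centre---is the same as the paper's; but the way you establish the middle step, $V'=V$, is genuinely different and worth contrasting. You first observe that every $\alpha\in\Aut(\bU)_{\{V\}}$ satisfies $\alpha\iota(h)=\iota(h)\beta$ with $\beta=h^{-1}\alpha h$ (so $\alpha(V')=V'$), pass this to a dense subgroup of $\Aut(\bV)$ via condition (1) of superhomogeneity, deduce $\Aut(\bV)$-invariance of $V'$ (here you rely, rather quickly, on the standard fact that setwise stabilisers are \emph{closed} subgroups of $\Sym(V)$, which needs continuity of inversion), and then finish by the orbit/quantifier-free-type correspondence: $\bV'\cong\bU\cong\bV$ realises every qf $1$-type, and an invariant set meeting every orbit is everything. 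The paper instead packages the first part of your argument into the relation $f^*=\{(\alpha,\beta):\alpha f=f\beta\}$ and the derived sets $I(f)$ and $B(f)$; $h^*=(\iota h)^*$ gives $\im\iota(h)\subseteq I(\iota h)=I(h)=\im h$ (Lemmas~\ref{Iimf} and \ref{Iimfeq}), and strict inclusion is excluded not by an orbit count but by a contradiction: if $y\in\im h\setminus\im\iota(h)$, the local isomorphism $y\mapsto\iota(h)(y)$ of $\bV$ extends by condition (1) to an $\alpha$, and via Lemma~\ref{xstar} one gets $(\alpha^{-1},\beta^{-1})\in(\iota h)^*$, forcing $\alpha^{-1}$ to preserve $\im\iota(h)$ (Lemma~\ref{xtwostar}), contradicting $\alpha^{-1}(\iota(h)(y))=y\notin\im\iota(h)$. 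Your version is arguably cleaner for this one proposition; the paper's $f^*$/$I(f)$/$B(f)$ formalism earns its keep because it is reused for Proposition~\ref{threeplus} (the density of $B(h)$), which is precisely the dense-centraliser step you reproduce, conjugated by $h$ (your $\psi=\iota(h)\circ h^{-1}\in Z(\Aut(\bV))$ is $h\zeta h^{-1}$ for the paper's $\zeta=h^{-1}\iota(h)\in Z(\Aut(\bU))$).
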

Before coming to the proof, let us prepare the terrain with some auxiliary results:
\begin{definition}
	Let $G\le \Sym(A)$ be a permutation group and let $M\le T_A$ be the closure of $G$ in $T_A$. For every $f\in M$, $x\in A$ we define
	\begin{align*}
	f^*&:=\{(\alpha,\beta)\mid \alpha,\beta\in G,\alpha \circ f = f\circ\beta\},\\
	f^*(x)&:=\bigcap\{\fix(\alpha)\mid (\alpha,\beta)\in f^*, x\in\fix(\beta)\},\\
	I(f) &:= \bigcup_{x\in A} f^*(x),\\
	B(f) &:=\{\beta\in\Aut(\bA)\mid \exists\alpha\in\Aut(\bA):(\alpha,\beta)\in f^*\}.
	\end{align*}
	Here, as usual, $\fix(\pi)$ denotes the set of fixed points of a permutation $\pi$. 
\end{definition}

\begin{lemma}\label{xtwostar}
	With the notions from above, if $(\alpha,\beta)\in f^*$, then $\alpha$ preserves $\im(f)$. 
\end{lemma}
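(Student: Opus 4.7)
The statement is essentially a direct unpacking of the defining relation $\alpha\circ f=f\circ\beta$. My plan is the following.

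First, I would observe that since $G\le\Sym(A)$ is a group, the relation $f^*$ is closed under the map $(\alpha,\beta)\mapsto(\alpha^{-1},\beta^{-1})$. Indeed, if $\alpha\circ f=f\circ\beta$, then composing on the left with $\alpha^{-1}$ and on the right with $\beta^{-1}$ yields $f\circ\beta^{-1}=\alpha^{-1}\circ f$, i.e.\ $(\alpha^{-1},\beta^{-1})\in f^*$.

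Next, I would show the inclusion $\alpha(\im(f))\subseteq\im(f)$. Take any $y\in\im(f)$, write $y=f(x)$ for some $x\in A$, and compute
\[ \alpha(y)=\alpha(f(x))=(\alpha\circ f)(x)=(f\circ\beta)(x)=f(\beta(x))\in\im(f). \]
Applying the same argument to the pair $(\alpha^{-1},\beta^{-1})\in f^*$ from the first step gives $\alpha^{-1}(\im(f))\subseteq\im(f)$, hence $\im(f)\subseteq\alpha(\im(f))$, and the two inclusions combine to $\alpha(\im(f))=\im(f)$.

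There is no real obstacle here: the whole content of the lemma is that $f^*$ is symmetric under inversion (which uses that $G$ is a group, not merely a monoid) together with the obvious forward image computation. I would present the proof as the two-line calculation above, preceded by the one-line remark about $(\alpha^{-1},\beta^{-1})\in f^*$.
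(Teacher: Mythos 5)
Your proof is correct, and the central computation is exactly the one the paper uses: take $y=f(x)\in\im(f)$ and compute $\alpha(y)=\alpha(f(x))=(f\circ\beta)(x)=f(\beta(x))\in\im(f)$. You go a step further: you note that $f^*$ is closed under $(\alpha,\beta)\mapsto(\alpha^{-1},\beta^{-1})$ (which indeed uses that $G$ is a group) and thereby upgrade the conclusion to the equality $\alpha(\im(f))=\im(f)$. The paper proves only the inclusion $\alpha(\im(f))\subseteq\im(f)$, and this is all that ``preserves'' is taken to mean here; when the reverse direction is later needed (in the proof of Corollary~\ref{twoplus}), the paper simply applies the lemma to a pair of the form $(\alpha^{-1},\beta^{-1})$ obtained from Lemma~\ref{xstar}, which is in effect the same move you make internally. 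So nothing in your argument is wrong or superfluous in spirit; you have just folded into the lemma a small strengthening that the paper prefers to keep separate.
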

\begin{proof}
	Let $y\in \im(f)$---say, $y=f(x)$. Then 
	\[
	\alpha(y)=\alpha(f(x))=(\alpha\circ f)(x) = (f\circ\beta)(x)=f(\beta(x))\in\im(f).\qedhere
	\]
\end{proof}
\begin{lemma}\label{plus}
	With the notions from above, let $g:=f\circ\zeta$, for some $\zeta\in G$. Let $(\alpha,\beta)\in f^*$. Then $(\alpha,\beta)\in g^*$ if and only if $\beta\circ\zeta=\zeta\circ\beta$. 	
\end{lemma}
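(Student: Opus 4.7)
The plan is to unwind both conditions by direct computation. First, I would rewrite the condition $(\alpha,\beta)\in g^*$ as $\alpha\circ g = g\circ\beta$, and then substitute $g = f\circ\zeta$ to obtain
\[
\alpha\circ f\circ\zeta \;=\; f\circ\zeta\circ\beta.
\]
Next, using the hypothesis $(\alpha,\beta)\in f^*$, i.e.\ $\alpha\circ f = f\circ\beta$, I would rewrite the left-hand side as $f\circ\beta\circ\zeta$, so that the condition reduces to
\[
f\circ\beta\circ\zeta \;=\; f\circ\zeta\circ\beta.
\]

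To pass from this to the desired equivalence $\beta\circ\zeta = \zeta\circ\beta$, I would invoke the injectivity of $f$, hence left-cancellativity. This uses the fact that $f\in M$ is a pointwise limit of elements of $G\le\Sym(A)$: if we had $f(x)=f(y)$ for some $x\neq y$, then any net $(\pi_i)$ in $G$ converging pointwise to $f$ would eventually satisfy $\pi_i(x)=\pi_i(y)$, contradicting that $\pi_i$ is a permutation. Cancelling $f$ on the left yields the forward direction, and the reverse direction follows by running the computation backwards (since $\alpha\circ f = f\circ\beta$ and $\beta\circ\zeta=\zeta\circ\beta$ together give $\alpha\circ f\circ\zeta = f\circ\beta\circ\zeta = f\circ\zeta\circ\beta$).

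No real obstacle is expected here; the lemma is essentially a bookkeeping identity recording how membership in $f^*$ behaves under right multiplication of $f$ by a group element $\zeta$, with the injectivity of closed-limit transformations doing the only non-trivial work.
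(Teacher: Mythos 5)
Your proof is correct and follows exactly the same chain of equivalences as the paper: substitute $g=f\circ\zeta$, use $\alpha\circ f=f\circ\beta$ to reduce to $f\circ\beta\circ\zeta=f\circ\zeta\circ\beta$, and cancel $f$ on the left by injectivity. The paper simply omits the justification of $f$'s injectivity (which you correctly note follows from $f$ being in the closure of the permutation group $G$), but the argument is otherwise identical.
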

\begin{proof}
	\begin{align*}
		(\alpha,\beta)\in g^* &\iff \alpha\circ f\circ\zeta= f\circ\zeta\circ\beta \iff f\circ\beta\circ\zeta = f\circ\zeta\circ\beta\\
		&\iff \beta\circ\zeta=\zeta\circ\beta.
	\end{align*}
\end{proof}

\begin{lemma}\label{xstar}
	Let $\bA$ be a structure, $f\in\Emb(\bA)$. Let $\alpha\in\Aut(\bA)$, such that  the restriction of $\alpha$ to $\im(f)$ is an automorphism of $\langle \im(f)\rangle_\bA$. Then there exists some $\beta\in\Aut(\bA)$, such that $(\alpha,\beta)$ and $(\alpha^{-1},\beta^{-1})$ both belong to $f^*$. 
\end{lemma}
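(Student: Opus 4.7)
The plan is to construct $\beta$ explicitly and then to exploit a symmetry in the defining equation of $f^*$. The key observation that cuts the work in half is that $(\alpha,\beta)\in f^*$ and $(\alpha^{-1},\beta^{-1})\in f^*$ are actually equivalent conditions: multiplying $\alpha\circ f = f\circ\beta$ on the left by $\alpha^{-1}$ and on the right by $\beta^{-1}$ yields $f\circ\beta^{-1}=\alpha^{-1}\circ f$, and the converse is symmetric. Hence it suffices to produce a single $\beta\in\Aut(\bA)$ satisfying $\alpha\circ f = f\circ\beta$.

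For the construction, I would use that, since $f\in\Emb(\bA)$, the corestriction $\tilde f\colon \bA\to \langle\im(f)\rangle_\bA$ is an isomorphism of structures. The hypothesis says precisely that $\alpha\restr_{\im(f)}$ is an automorphism of $\langle\im(f)\rangle_\bA$. Therefore the composition
\[
\beta \;:=\; \tilde f^{-1}\circ \bigl(\alpha\restr_{\im(f)}\bigr)\circ \tilde f
\]
is a composition of three isomorphisms and so is an automorphism of $\bA$. A one-line check unwinding the definition of $\beta$ then gives $f\circ\beta(a) = \tilde f(\beta(a)) = \alpha(\tilde f(a)) = \alpha(f(a))$ for every $a\in A$, i.e.\ $(\alpha,\beta)\in f^*$; combined with the symmetry above, $(\alpha^{-1},\beta^{-1})\in f^*$ follows for free.

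I do not expect any genuine obstacle here, as the content of the lemma is essentially bookkeeping. The one subtlety worth keeping in view is that the hypothesis on $\alpha$ is doing double duty: it is needed so that $\alpha$ maps $\im(f)$ \emph{onto} itself (which makes $\tilde f^{-1}\circ\alpha\restr_{\im(f)}\circ\tilde f$ defined on all of $A$) and so that the middle factor is a genuine isomorphism rather than merely an injective self-map, which is what lets us conclude $\beta\in\Aut(\bA)$ instead of only $\beta\in\Emb(\bA)$.
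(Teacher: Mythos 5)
Your construction of $\beta$ as $\tilde f^{-1}\circ(\alpha\restr_{\im(f)})\circ\tilde f$ is exactly the paper's: it factorizes $f$ through the isomorphism onto $\langle\im(f)\rangle_\bA$ and conjugates $\alpha\restr_{\im(f)}$ back to $\Aut(\bA)$. The only difference is your (correct) observation that $(\alpha,\beta)\in f^*$ already implies $(\alpha^{-1},\beta^{-1})\in f^*$ by inverting the defining equation, which lets you check one equality instead of two as the paper does; this is a minor streamlining, not a different route.
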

\begin{proof}
	Let us factorize $f$ according to
	\[
	\begin{tikzcd}
		\bA \rar{f_{\cong}}[swap]{\cong}& \langle\im(f)\rangle_\bA\rar[hook]{\iota}[swap]{=} & \bA.
	\end{tikzcd}
	\]
	Let $\alpha'$ be the restriction of $\alpha$ to $\im(f)$. Define $\beta:= f_{\cong}^{-1}\circ\alpha'\circ f_{\cong}$. Then $\beta\in\Aut(\bA)$. Now it remains to compute:
	\begin{align*}
		f\circ\beta &= \iota\circ f_{\cong}\circ f_{\cong}^{-1}\circ\alpha'\circ f_{\cong} = \iota\circ\alpha'\circ f_{\cong} = \alpha\circ f,\\
		f\circ\beta^{-1} &= \iota\circ f_{\cong}\circ f_{\cong}^{-1}\circ\alpha'^{-1}\circ f_{\cong} = \iota\circ\alpha'^{-1}\circ f_{\cong} = \alpha^{-1}\circ f.
	\end{align*}
\end{proof}

\begin{lemma}\label{Iimf}
	With the notions from above, $\im(f)\subseteq I(f)$ for all $f\in M$. 
\end{lemma}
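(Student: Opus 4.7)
The plan is to unravel the definition of $I(f)$ and show, directly, that any element in the image belongs to one of the sets $f^*(x)$ that are taken in the union. Concretely, given $y\in\im(f)$, I would pick any $x\in A$ with $f(x)=y$ and argue that this particular $x$ already witnesses membership, i.e.\ $y\in f^*(x)$.

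By the definition of $f^*(x)$, proving $y\in f^*(x)$ amounts to showing that $y\in\fix(\alpha)$ for every pair $(\alpha,\beta)\in f^*$ such that $x\in\fix(\beta)$. Fix such a pair. Since $(\alpha,\beta)\in f^*$ we have $\alpha\circ f=f\circ\beta$, and since $\beta(x)=x$ we can simply compute
\[
\alpha(y)=\alpha(f(x))=(\alpha\circ f)(x)=(f\circ\beta)(x)=f(\beta(x))=f(x)=y,
\]
so $y\in\fix(\alpha)$, as required. Taking the intersection over all such pairs still yields an element containing $y$, which gives $y\in f^*(x)\subseteq I(f)$.

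There is essentially no obstacle here; the only thing worth noting is the degenerate case in which no pair $(\alpha,\beta)\in f^*$ satisfies $x\in\fix(\beta)$, but then by convention the intersection defining $f^*(x)$ is all of $A$ and the conclusion is trivial. In any case the identity pair $(\mathrm{id},\mathrm{id})$ always lies in $f^*$, so the intersection is never vacuous. Hence the lemma follows in a few lines from the definitions alone, and no appeal to saturation, homogeneity, or the earlier lemmas (\ref{xtwostar}, \ref{plus}, \ref{xstar}) is needed.
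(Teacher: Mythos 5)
Your argument is correct and coincides with the paper's own proof: you pick $x$ with $f(x)=y$, unwind the definition of $f^*(x)$, run the same one-line computation $\alpha(f(x))=f(\beta(x))=f(x)$, and note $(1_A,1_A)\in f^*$ for good measure. Nothing further to add.
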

\begin{proof}
	First of all note that $f^*$ is never empty, as it contains $(1_A,1_A)$. Let $x\in A$, $(\alpha,\beta)\in f^*$. Suppose $\beta(x)=x$. Then $\alpha(f(x))= (\alpha\circ f)(x)= (f\circ\beta)(x)= f(\beta(x))=f(x)$. Thus, $f(x)\in f^*(x)\subseteq I(f)$.
\end{proof}

\begin{lemma}\label{Bdense}
	Let $\bA$ be a countable homogeneous structure and let $f\in\Emb(\bA)$. If $\im(f)$ is superhomogeneous in $\bA$, then $B(f)$	lies dense in $\Aut(\bA)$. 
\end{lemma}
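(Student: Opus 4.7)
The plan is to use the standard characterization of density in $\Aut(\bA)$: the set $B(f)$ is dense if and only if for every $\gamma \in \Aut(\bA)$ and every finite tuple $\tbar a \in A^n$ there exists $\beta \in B(f)$ with $\beta(\ta) = \gamma(\ta)$ (here I abuse notation, writing $\ta$ for a finite tuple). So I fix such $\gamma$ and $\ta$ and aim to produce a suitable $\beta$.

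The key observation is that, via Lemma~\ref{xstar}, producing $\beta \in B(f)$ is reduced to producing an automorphism $\alpha \in \Aut(\bA)$ whose restriction to $\im(f)$ is an automorphism of $\langle\im(f)\rangle_\bA$; the resulting $\beta$ then satisfies $f \circ \beta = \alpha \circ f$, so in particular $f(\beta(\ta)) = \alpha(f(\ta))$. Hence to force $\beta(\ta) = \gamma(\ta)$ it suffices (by injectivity of $f$) to find an $\alpha$ stabilizing $V := \im(f)$ setwise (and restricting to an automorphism of $\langle V\rangle_\bA$) with $\alpha(f(\ta)) = f(\gamma(\ta))$.

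The tuples $f(\ta)$ and $f(\gamma(\ta))$ both lie inside $V$. Because $f$ is an embedding and $\gamma$ is an automorphism of $\bA$, the assignment $f(a_i) \mapsto f(\gamma(a_i))$ is a partial isomorphism of $\langle V\rangle_\bA$, i.e. a local isomorphism of the substructure $V$ (this follows from the chain: $\ta$ and $\gamma(\ta)$ have the same quantifier-free type in $\bA$, and $f$ preserves and reflects quantifier-free formulas). At this point I invoke clause~(1) of the definition of superhomogeneity: this local isomorphism of $V$ extends to an automorphism $\alpha$ of $\bA$ whose restriction to $V$ is an automorphism of $\langle V\rangle_\bA$. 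Applying Lemma~\ref{xstar} to $\alpha$ yields a $\beta \in \Aut(\bA)$ with $(\alpha,\beta) \in f^*$, so $\beta \in B(f)$; and by construction $f(\beta(a_i)) = \alpha(f(a_i)) = f(\gamma(a_i))$, giving $\beta(\ta) = \gamma(\ta)$ as required.

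There is no real obstacle here beyond assembling the right pieces, since only clause~(1) of superhomogeneity is needed (clause~(2) will presumably enter in a subsequent lemma about $I(f)$). The only thing to be careful about is the correct reading of superhomogeneity, namely that the extending automorphism genuinely stabilizes $V$ setwise, so that Lemma~\ref{xstar} is applicable with $\alpha\restr_V$ as the relevant automorphism of $\langle V\rangle_\bA$.
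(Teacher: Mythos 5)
Your proof is correct and follows essentially the same route as the paper's: push a finite piece of data forward via $f$ into $\langle\im(f)\rangle_\bA$, extend using clause~(1) of superhomogeneity, and apply Lemma~\ref{xstar} to recover $\beta\in B(f)$, then verify via injectivity of $f$ that $\beta$ agrees with the target on the finite tuple. The only cosmetic difference is that the paper phrases things in terms of extending an arbitrary local isomorphism $\lambda$ of $\bA$ to an element of $B(f)$ (which gives density by homogeneity of $\bA$), whereas you work directly with a finite restriction of an automorphism $\gamma$; these are interchangeable.
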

\begin{proof}
	Let $\lambda\colon\bC\injto\bA$ be a local isomorphism of $\bA$. Then $f\circ\lambda\circ f^{-1}\colon \langle f(C)\rangle_\bA\injto\langle \im(f)\rangle_\bA$ is a local isomorphism of $\langle \im(f)\rangle_\bA$. Since $\langle\im(f)\rangle_\bA$ is superhomogeneous in $\bA$, there exists $\alpha\in\Aut(\bA)$ that extends $f\circ\lambda\circ f^{-1}$, such that $\alpha\restr_{\im(f)}$ is an automorphism of $\langle\im(f)\rangle_\bA$. By Lemma~\ref{xstar}, there exists some $\beta\in\Aut(\bA)$, such that $(\alpha,\beta)\in f^*$. We claim that $\beta$ extends $\lambda$: Let $y\in f(C)$, say, $y=f(x)$. Then
	\[
	f(\beta(x)) = \alpha(f(x)) = \alpha(y) = f(\lambda(f^{-1}(y))) = f(\lambda(x)).
	\] 
	Since $f$ is injective, we conclude that $\lambda(x)=\beta(x)$. 
	
	Thus, every local isomorphism of $\bA$ extends to an element of $B(f)$. It follows that $B(f)$ is dense in $\Aut(\bA)$.
\end{proof}

\begin{lemma}\label{Iimfeq}
	Let $\bA$ be a countable homogeneous structure and let $f\in\Emb(\bA)$. If $\langle\im(f)\rangle_\bA$ is superhomogeneous in $\bA$, then $I(f)=\im(f)$.
\end{lemma}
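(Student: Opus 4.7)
The plan is to combine the containment $\im(f) \subseteq I(f)$ already provided by Lemma~\ref{Iimf} with a direct argument for $I(f) \subseteq \im(f)$. For the reverse inclusion I would deliberately bypass the more intricate machinery developed in Lemmas~\ref{xtwostar}--\ref{Bdense} and exploit one very special family of pairs in $f^*$: those of the form $(\alpha, 1_A)$, where $\alpha$ ranges over the pointwise stabilizer of $\im(f)$ in $\Aut(\bA)$. For any such $\alpha$, we have $\alpha\circ f = f = f\circ 1_A$, so $(\alpha, 1_A)\in f^*$.

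Given that family, the argument unwinds in essentially one line. Take any $y\in I(f)$ and fix $x\in A$ with $y\in f^*(x)$. Since $1_A$ fixes $x$, the very definition of $f^*(x)$ forces $y\in\fix(\alpha)$ for every $\alpha\in\Aut(\bA)$ that fixes $\im(f)$ pointwise. In other words, $y$ is fixed by the whole pointwise stabilizer of $\im(f)$ in $\Aut(\bA)$. The second clause of the definition of a superhomogeneous substructure says, however, that the only points with this property are the ones in $\im(f)$: if $y$ lay outside $\im(f)$, that clause would produce an $\alpha$ fixing $\im(f)$ pointwise with $\alpha(y)\neq y$, contradicting what we just established. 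Hence $y\in\im(f)$.

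There is no real obstacle here beyond correctly matching the two clauses of superhomogeneity to the two inclusions, and in particular the countable homogeneity of $\bA$ is not actually used at this point. It is worth noting that the first clause of superhomogeneity, together with the nontrivial construction of elements of $f^*$ via Lemma~\ref{xstar} and the density statement of Lemma~\ref{Bdense}, plays no role in this particular equality; those ingredients presumably enter later, when $I(f)=\im(f)$ is combined with $B(f)$ to pin down the behaviour of endomorphisms of $\EEmb(\bA)$ on elements whose image is superhomogeneous.
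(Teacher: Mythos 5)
Your argument is exactly the paper's proof, merely phrased directly rather than by contradiction: both rest on the observation that any $\alpha$ fixing $\im(f)$ pointwise yields $(\alpha,1_A)\in f^*$, forcing every $y\in f^*(x)$ to be fixed by the whole pointwise stabilizer of $\im(f)$, and then invoke the second clause of superhomogeneity plus Lemma~\ref{Iimf}. Your side remark that countable homogeneity is not needed here is also accurate; that hypothesis is carried along for consistency with the surrounding lemmas rather than used in this proof.
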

\begin{proof}
Let $y\in I(f)$, say, $y\in f^*(x)$ for some $x\in A$. Suppose that $y\notin\im(f)$. Then, since $\langle\im(f)\rangle_\bA$ is superhomogeneous in $\bA$, there exists $\alpha\in\Aut(\bA)$  that fixes $\im(f)$ pointwise, but that moves $y$. Let $\beta:=1_A$. Then for every $z\in A$ we compute
\[
  (f\circ\beta)(z) = f(\beta(z))=f(z)=\alpha(f(z))= (\alpha\circ f)(z).
\]
In other words, $(\alpha,\beta)\in f^*$. But now, by construction,  $\beta(x)=x$ and $\alpha(y)\neq y$. Hence $y\notin f^*(x)$---a contradiction. Thus, $I(f)\subseteq \im(f)$. With Lemma~\ref{Iimf} the claim follows.
\end{proof}

\begin{corollary}\label{twoplus}
	Let $\bU$ be a countable homogeneous structure, and let $\iota$ be an endomorphism of $\Emb(\bU)$ that fixes $\Aut(\bU)$ pointwise. Let $h\in\Emb(\bU)$ such that $\langle\im(h)\rangle_\bU$ is superhomogeneous in $\bU$. Then $\im(h)=\im(\iota(h))$.  
\end{corollary}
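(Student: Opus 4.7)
The plan is to prove the two inclusions of $\im(h) = \im(\iota(h))$ separately. The inclusion $\im(\iota(h))\subseteq\im(h)$ follows directly from the preceding machinery: since $\iota$ fixes $\Aut(\bU)$ pointwise, applying $\iota$ to the defining relation $\alpha\circ h = h\circ\beta$ of any pair in $h^*$ yields $\alpha\circ\iota(h) = \iota(h)\circ\beta$, so $h^*\subseteq\iota(h)^*$. Unwinding the definition of $f^*(x)$, the inclusion of pair-sets reverses the inclusion of intersections, giving $\iota(h)^*(x)\subseteq h^*(x)$ and hence $I(\iota(h))\subseteq I(h)$. Combining this with Lemma~\ref{Iimf} and Lemma~\ref{Iimfeq} (the latter applicable because $\langle\im(h)\rangle_\bU$ is superhomogeneous) gives $\im(\iota(h))\subseteq I(\iota(h))\subseteq I(h) = \im(h)$. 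The reverse inclusion is the main obstacle and requires using the richness of $B(h)$.

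For $\im(h)\subseteq\im(\iota(h))$, the containment just established lets me factor $\iota(h) = h\circ g$, where $g := h^{-1}\circ\iota(h)\in\Emb(\bU)$ is well-defined since $h$ is an isomorphism onto its image. For any $(\alpha,\beta)\in h^*$, applying $\iota$ gives $\alpha\circ\iota(h) = \iota(h)\circ\beta$; substituting $\iota(h) = h\circ g$ and cancelling the injection $h$ from the left yields $\beta\circ g = g\circ\beta$. Thus $g$ commutes with every $\beta\in B(h)$. By Lemma~\ref{Bdense} the set $B(h)$ is dense in $\Aut(\bU)$, and since both left and right composition with the fixed element $g$ are continuous in the topology of pointwise convergence, the commutation extends to all of $\Aut(\bU)$.

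Finally, I show that $g$ is surjective. Fix any $x\in U$. Since $g$ is an embedding, its restriction to the finitely generated substructure $\langle x\rangle_\bU$ is an isomorphism onto $\langle g(x)\rangle_\bU$, hence a local isomorphism of $\bU$. By homogeneity of $\bU$, it extends to some $\beta\in\Aut(\bU)$ satisfying $\beta(x) = g(x)$. Using $g\circ\beta^{-1} = \beta^{-1}\circ g$, I get $g(\beta^{-1}(x)) = \beta^{-1}(g(x)) = x$, placing $x$ in $\im(g)$. Hence $\im(g) = U$, and therefore $\im(\iota(h)) = h(\im(g)) = h(U) = \im(h)$, completing the proof. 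Notice that trivial center is not needed here; it will enter only later, in Proposition~\ref{superhomfix}, to upgrade the image equality to the identity $\iota(h) = h$.
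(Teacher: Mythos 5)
Your proof is correct, but it takes a genuinely different route from the paper's. For the reverse inclusion $\im(h)\subseteq\im(\iota(h))$, the paper argues by contradiction: given $y\in\im(h)\setminus\im(\iota(h))$, it uses superhomogeneity to extend the single-point local isomorphism $y\mapsto\iota(h)(y)$ to an automorphism $\alpha$ of $\bU$ fixing $\im(h)$ setwise, and then Lemmas~\ref{xstar} and~\ref{xtwostar} produce a contradiction since $\alpha^{-1}$ must preserve $\im(\iota(h))$ yet moves $\iota(h)(y)$ out of it. You instead argue constructively: factor $\iota(h)=h\circ g$ with $g\in\Emb(\bU)$, show $g$ commutes with every $\beta\in B(h)$ by applying $\iota$ to $\alpha\circ h=h\circ\beta$ and cancelling $h$, invoke Lemma~\ref{Bdense} (density of $B(h)$, which is where superhomogeneity enters for you) plus continuity of composition to extend the commutation to all of $\Aut(\bU)$, and then use homogeneity of $\bU$ to manufacture, for each $x\in U$, a preimage $\beta^{-1}(x)$ of $x$ under $g$. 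The payoff of your route is that it actually proves more than the statement: $g=h^{-1}\circ\iota(h)$ is shown to be an automorphism commuting with all of $\Aut(\bU)$, i.e.\ $g\in Z(\Aut(\bU))$, which is essentially the content of Proposition~\ref{threeplus} and immediately yields Proposition~\ref{superhomfix} once the trivial-center hypothesis is added. So you have collapsed the chain Corollary~\ref{twoplus} $\to$ Proposition~\ref{threeplus} $\to$ Proposition~\ref{superhomfix} into one argument. One small point where you are in fact more careful than the paper: you only claim $h^*\subseteq\iota(h)^*$, which is all that is immediate (and all the paper actually uses), whereas the paper asserts the equality $h^*=(\iota(h))^*$ outright, which is only clear a posteriori.
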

\begin{proof}
	Clearly, $h^*=(\iota(h))^*$. Hence, using Lemma~\ref{Iimfeq}, we have $I(h)=I(\iota(h))=\im(h)$. By Lemma~\ref{Iimf} we have $\im(\iota(h))\subseteq\im(h)$. In particular, $\iota(h)$ preserves $\im(h)$. 
	
	Suppose that $\im(\iota(h))\subsetneq\im(h)$. Let $y\in\im(h)\setminus\im(\iota(h))$, say, $y=h(x)$. Then $y\neq \iota(h)(y)\in\im(\iota(h))$. As $\iota(h)$ preserves $\im(h)$, it follows that the mapping $y\mapsto\iota(h)(y)$ is a local isomorphism of $\langle\im(h)\rangle_\bU$. Thus, since $\langle\im(h)\rangle_\bU$ is superhomogeneous in $\bU$, there exists some $\alpha\in\Aut(\bU)$ that extends this to an automorphism of $\bU$ whose restriction to $\im(h)$ is an automorphism  of $\langle\im(h)\rangle_\bU$. By Lemma~\ref{xstar} there exists $\beta\in\Aut(\bU)$, such that $(\alpha^{-1},\beta^{-1})\in h^*=(\iota(h))^*$. Since $(\alpha^{-1},\beta^{-1})\in (\iota(h))^*$, it follows from Lemma~\ref{xtwostar} that $\alpha^{-1}$ preserves $\im(\iota(h))$. However, $\alpha^{-1}(\iota(h)(y)) = y\notin\im(\iota(h))$---a contradiction. Thus, $\im(h)=\im(\iota(h))$. 
\end{proof}

\begin{proposition}\label{threeplus}
	Let $\bU$ be a countable homogeneous structure and let $f,g$ be self-embeddings of $\bU$ with the same superhomogeneous image. Then
	\[
	   f^*=g^* \iff\exists\zeta\in Z(\Aut(\bU)): g=f\circ\zeta.
	\]
\end{proposition}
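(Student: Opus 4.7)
The easy direction is the implication from right to left. Suppose $g = f\circ\zeta$ with $\zeta\in Z(\Aut(\bU))$. If $(\alpha,\beta)\in f^*$, then Lemma~\ref{plus} tells us that $(\alpha,\beta)\in g^*$ precisely when $\beta\circ\zeta=\zeta\circ\beta$, which holds because $\zeta$ is central; hence $f^*\subseteq g^*$. Writing $f=g\circ\zeta^{-1}$ and applying the same lemma with the central element $\zeta^{-1}$ gives $g^*\subseteq f^*$.

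For the non-trivial direction, assume $f^*=g^*$ and that $\im(f)=\im(g)$ is superhomogeneous in $\bU$. Since $\im(g)\subseteq\im(f)$, the composite $\zeta:=f^{-1}\circ g$ makes sense as a map $\bU\to\bU$ (here $f^{-1}$ denotes the inverse of the isomorphism $\bU\cong\langle\im(f)\rangle_\bU$). It is an embedding, and it is surjective since
\[
\zeta(U)=f^{-1}(g(U))=f^{-1}(\im(g))=f^{-1}(\im(f))=U.
\]
Hence $\zeta\in\Aut(\bU)$, and by construction $g=f\circ\zeta$. What remains is to show that $\zeta$ is central.

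The key computation is the following. Take any $\beta\in B(f)=B(g)$ and an $\alpha\in\Aut(\bU)$ with $(\alpha,\beta)\in f^*=g^*$; then $\alpha\circ f=f\circ\beta$ and $\alpha\circ g=g\circ\beta$. Substituting $g=f\circ\zeta$ in the second equation gives $\alpha\circ f\circ\zeta=f\circ\zeta\circ\beta$, while the first equation rewrites the left-hand side as $f\circ\beta\circ\zeta$. Injectivity of $f$ then yields $\beta\circ\zeta=\zeta\circ\beta$. Thus $\zeta$ commutes with every element of $B(f)$.

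Finally, since $\bU$ is homogeneous and $\im(f)$ is superhomogeneous, Lemma~\ref{Bdense} tells us that $B(f)$ is dense in $\Aut(\bU)$. The set $C(\zeta):=\{\beta\in\Aut(\bU)\mid\beta\circ\zeta=\zeta\circ\beta\}$ is closed in the topology of pointwise convergence, being the equalizer of the two continuous maps $\beta\mapsto\beta\circ\zeta$ and $\beta\mapsto\zeta\circ\beta$ on the topological group $\Aut(\bU)$. A closed set containing the dense set $B(f)$ must be all of $\Aut(\bU)$, so $\zeta\in Z(\Aut(\bU))$, as required. The only step that requires genuine care is verifying that $\zeta$ lies in $\Aut(\bU)$ and the density/continuity argument for centrality; the commutation computation itself is short, and the superhomogeneity hypothesis enters solely through Lemma~\ref{Bdense}.
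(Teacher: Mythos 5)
Your proof is correct and follows essentially the same route as the paper: construct $\zeta$ from $\im(f)=\im(g)$, deduce that $\zeta$ commutes with every element of $B(f)$ (you redo the computation underlying Lemma~\ref{plus} explicitly), and then use density of $B(f)$ from Lemma~\ref{Bdense} together with closedness of the centralizer. The only cosmetic differences are that you spell out why $\zeta$ is actually an automorphism and why a dense commuting set forces centrality, and you invoke Lemma~\ref{plus} twice for the easy direction where the paper does one direct chain of equivalences.
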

\begin{proof}
	``$\Rightarrow$'' Suppose $f^*=g^*$. Since $\im(f)=\im(g)$, there exists $\zeta\in\Aut(\bU)$, such that $g= f\circ\zeta$. 
	Since $f^*=g^*$, by Lemma~\ref{plus}, we have that $\beta\circ\zeta=\zeta\circ\beta$, for all $\beta\in B(f)$. Since $\langle\im(f)\rangle$ is superhomogeneous in $\bA$, it follows from Lemma~\ref{Bdense}, that $B(f)$ lies dense in $\Aut(\bA)$. Thus, $\zeta$ commutes with a dense subset of $\Aut(\bA)$. It follows, that $\zeta$ is in the center of $\Aut(\bA)$. 
	
	``$\Leftarrow$'' Suppose that there exists some $\zeta\in Z(\Aut(\bA))$, such that $g=f\circ\zeta$. Let $\alpha, \beta\in\Aut(\bA)$. Then 
	\begin{align*}
	(\alpha,\beta)\in f^* &\iff \alpha\circ f = f\circ\beta \iff \alpha\circ f\circ\zeta = f\circ\beta\circ\zeta\\
	&\iff\alpha\circ f\circ\zeta = f\circ\zeta\circ\beta \iff \alpha\circ g = g\circ\beta\\
	&\iff (\alpha,\beta)\in g^*.
	\end{align*}
\end{proof}

\begin{proof}[Proof of Proposition~\ref{superhomfix}]
	Since $\langle\im(h)\rangle_\bU$ is superhomogeneous in $\bU$, it follows from Corollary~\ref{twoplus}, that  $h$ and $\iota(h)$ have the same image. Moreover, from Proposition~\ref{threeplus} it follows that $\iota(h)=h\circ\zeta$, for some $\zeta\in Z(\Aut(\bU)$. Since the center of $\Aut(\bU)$ is trivial, it follows  that $h=\iota(h)$.
\end{proof}

\section{Existence of superhomogeneous substructures}
In this section we are going to show that smooth countable saturated structures are rich in selfembeddings with superhomogeneous image. 

The main result of this section is going to be:
\begin{proposition}\label{alphabeta}
	Let $\bA$ be a smooth countable saturated structure, and let $\bB$ be a substructure of $\bA$ isomorphic to $\bA$. Then there exists an extension $\widetilde\bA$ of $\bA$, such that $\widetilde\bA\cong\bA$ and such that both, $\bA$ and $\bB$, are superhomogeneous in $\widetilde\bA$.  
\end{proposition}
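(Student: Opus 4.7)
My plan is a back-and-forth construction over $\omega$ stages. I set up an ascending chain $\bA=\widetilde{\bA}_0\subseteq\widetilde{\bA}_1\subseteq\cdots$ of countable $L$-structures with $\widetilde{\bA}:=\bigcup_n\widetilde{\bA}_n$, and schedule three families of tasks via a standard $\omega\times\omega$ diagonalization. The first family consists of saturation tasks: realize every complete one-variable type over every finite subset of the union that eventually arises, so that $\widetilde{\bA}$ ends up $\aleph_0$-saturated. The second family, for every finite local isomorphism of $\bA$ and, separately, of $\bB$, builds an automorphism of $\widetilde{\bA}$ extending it whose restriction to $\bA$ (resp.\ to $\bB$) is an automorphism of $\bA$ (resp.\ of $\bB$); here I first use homogeneity of $\bA$ (and hence of $\bB\cong\bA$) to extend the local isomorphism to an automorphism of $\bA$ (resp.\ of $\bB$), and then grow its graph back-and-forth over $\widetilde{\bA}$ in infinitely many later stages. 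The third family, for every element $y$ ever adjoined outside $A$ (resp.\ outside $B$), adjoins a distinct twin $y'$ realizing the same type over $\bA$ (resp.\ over $\bB$) and simultaneously builds an automorphism of $\widetilde{\bA}$ fixing $\bA$ (resp.\ $\bB$) pointwise and swapping $y$ with $y'$.

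Each single stage is a finite modification: adjoin one new element with a specified consistent type, or extend one of the partial automorphisms under construction by a single value, possibly adjoining a fresh witness. The consistency of every type that I realize is automatic, because in each case that type is the pushforward under a partial elementary map of a type already realized. Quantifier elimination, built into ``smooth'', makes every embedding between models of $\Th(\bA)$ elementary, so these one-point extensions stay inside $\Th(\bA)$; and $\aleph_0$-saturation of $\bA$ provides realizations of every consistent type over a finite parameter set, which is all that a single back-and-forth step ever requires. The union $\widetilde{\bA}$ is therefore a countable $\aleph_0$-saturated model of $\Th(\bA)$, so $\widetilde{\bA}\cong\bA$. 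Tasks of the second family give the first clause of superhomogeneity (extendability of local isomorphisms acting as automorphisms on $\bA$ resp.\ $\bB$), and tasks of the third family give the second clause (richness of the pointwise stabilizers of $\bA$ and $\bB$ in $\Aut(\widetilde{\bA})$).

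The main obstacle is the coherent interleaving of these three task families. Each partial automorphism introduced under the second or third family must, at every subsequent stage, be extended to act on every freshly adjoined element; and every freshly adjoined element must never violate the type requirements imposed by the partial automorphisms that are only partially defined at that moment. Scheduling the countably many back-and-forth extensions so that every finite amalgamation reduces to realizing a consistent finite partial type is the technical heart of the argument. Once that is arranged, everything else is a routine application of homogeneity of $\bA$ and of $\aleph_0$-saturation of the models of $\Th(\bA)$ involved.
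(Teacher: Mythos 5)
Your back-and-forth plan shares the broad outline of the paper's proof --- both construct $\widetilde{\bA}$ as the union of a countable chain, with interleaved tasks ensuring the two clauses of superhomogeneity and homogeneity of the limit --- but the paper avoids the back-and-forth entirely. It fixes once and for all an $\aleph_1$-big elementary extension $\widehat{\bA}\succeq\bA$ (Corollary~\ref{aleph1big}), obtains every automorphism it needs directly from $\Aut(\widehat{\bA})$ via bigness and Corollary~\ref{stabauto}, and then closes a growing countable substructure of $\widehat{\bA}$ under a growing countable group of these automorphisms; the desired automorphisms of $\widetilde{\bA}$ arise by restriction. Working inside $\widehat{\bA}$ makes all the consistency and existence questions trivial.

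The genuine gap in your plan is the claim that each back-and-forth step reduces to realizing a type over a \emph{finite} parameter set. That fails for the pointwise-stabilizer automorphisms. To move a given $y\notin A$ while fixing $A$ pointwise you first need a twin $y'\neq y$ with $\tp(y'/A)=\tp(y/A)$ --- a type over the infinite set $A$. Arranging only $\tp(y'/F)=\tp(y/F)$ for a finite $F\subseteq A$ is not enough: some later $a\in A\setminus F$ may have $\tp(ya)\neq\tp(y'a)$, so the partial map cannot subsequently fix $a$. Likewise, extending the partial automorphism $1_A\cup\{(y,y')\}$ to a new $z$ requires realizing a type over $A\cup\{y\}$. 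Countable $\aleph_0$-saturation does not yield realizations of types over infinite parameter sets, and it is not even automatic that the twin $y'$ exists in any countable structure at hand; the paper secures its existence in $\widehat{\bA}$ using $\aleph_1$-saturation and non-definability (Lemmas~\ref{star} and~\ref{twostar}). To salvage your route you would have to add such realizations as fresh elements of the chain and then re-establish that the union is still a countable $\aleph_0$-saturated model of $\Th(\bA)$ --- a non-routine step that the big-model detour is designed precisely to avoid.
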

For proving this proposition we need to invoke somewhat heavier tools from model theory---highly saturated structures. Most of the auxiliary results needed for our proof can be found in the standard model-theoretic literature (see, e.g., Chapter 10 of \cite{Hod93}). For the convenience of the reader we collect the necessary results with exact references before going on to prove Proposition~\ref{alphabeta}.

\begin{definition}
	Let $L$ be a signature. An $L$-structure $\bM$ is called \emph{splendid} if for all extensions $L^+$ of $L$ by one relational symbol $\varrho$ and for every $L^+$-structure $\mathbf{N}$ whose $L$-reduct is elementarily equivalent with $\bM$ there exists an interpretation $\varrho^{\bM}$ of $\varrho$ in $\bM$ such that $(\bM, \varrho^{\bM})\equiv \mathbf{N}$. The structure $\bM$ is called \emph{$\lambda$-big} if $(\bM,\ta)$ is splendid for every tuple $\ta$ over $M$ of length $<\lambda$. 
\end{definition}

\begin{theorem}[{\cite[Theorem 10.2.1]{Hod93}}]\label{lambdabig}
	Let $L$ be a signature, $\bA$ be an $L$-structure, $\lambda$ be a regular cardinal such that $\lambda > |L|$. Then $\bA$ has a $\lambda$-big elementary extension $\widehat\bA$ of cardinality $\le |A|^{<\lambda}$.
\end{theorem}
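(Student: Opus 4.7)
The plan is to build $\widehat\bA$ as the union of an elementary chain of length $\lambda$, where each successor step resolves one potential failure of splendour via a compactness argument, and regularity of $\lambda$ absorbs the bookkeeping.

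First I would isolate a one-step lemma: given an $L$-structure $\bM$, a tuple $\ta \in M^{<\lambda}$, an extension $L^+ = L \cup \{\varrho\}$, and an $L^+$-structure $\bN$ with $\bN\restr_L \equiv (\bM,\ta)$, there exist an elementary extension $\bM \preceq \bM^+$ and an interpretation $\varrho^{\bM^+}$ such that $(\bM^+, \varrho^{\bM^+}, \ta) \equiv \bN$, with $|M^+|$ bounded by $|M| + |L| + \aleph_0$. This is a routine compactness argument applied to the theory $\operatorname{ElDiag}_L(\bM) \cup \Th_{L^+}(\bN, \tilde n)$, where $\tilde n$ is the image of $\ta$ inside $\bN$; finite satisfiability follows from $\bN\restr_L \equiv (\bM,\ta)$ by absorbing finitely many extra $L$-parameters from the elementary diagram into existential quantifiers. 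Downward L\"owenheim--Skolem then trims the resulting model to the required cardinality.

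Next, I would construct an elementary chain $(\bA_\alpha)_{\alpha<\lambda}$ with $\bA_0 = \bA$, taking unions at limits. Using a diagonal enumeration indexed by $\lambda$, list all pending ``tasks'' $(\ta, \bN)$ with $\ta$ a tuple of length $<\lambda$ in some $\bA_\beta$ and $\bN$ a potential witness to a failure of splendour at $(\bA_\beta, \ta)$; at each successor stage apply the one-step lemma to one pending task. Set $\widehat\bA := \bigcup_{\alpha<\lambda} \bA_\alpha$. For splendour of $(\widehat\bA, \ta)$ when $|\ta|<\lambda$, regularity of $\lambda$ yields some $\beta<\lambda$ with $\ta$ already in $\bA_\beta$; any $\bN$ witnessing non-splendour over $\widehat\bA$ transfers to $\bA_\beta$ by elementarity, is enumerated somewhere, and is therefore resolved by a later step of the chain and preserved into the union. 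The cardinality bound $|\widehat A| \le |A|^{<\lambda}$ follows by induction along the chain, each step enlarging the structure by at most $|A_\alpha|^{<\lambda}$ many elements.

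The main obstacle is the bookkeeping, not the compactness step: the set of splendour tasks is not fixed in advance but is generated by the chain itself, so one must diagonalise carefully to ensure that every tuple of length $<\lambda$ appearing at any stage, together with every $L^+$-theory one might wish to realise on it, is eventually handled. Here the hypothesis $\lambda > |L|$ is used to count — at each stage the number of newly appearing tasks stays below $|A|^{<\lambda}$ — and regularity of $\lambda$ ensures both that tuples of length $<\lambda$ are confined to a proper initial segment of the chain and that cofinal subchains suffice to realise all enumerated tasks.
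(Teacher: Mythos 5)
The paper does not prove this statement; it is quoted verbatim from Hodges \cite[Theorem 10.2.1]{Hod93}, so there is no internal proof to compare against. Taken on its own, your plan has the right shape (elementary chain of length $\lambda$, a compactness plus downward L\"owenheim--Skolem one-step lemma, regularity to confine short tuples to initial segments), and your one-step lemma is correct as stated. But there is a genuine gap at the crucial final clause, ``is therefore resolved by a later step of the chain and preserved into the union.'' When you realise a task $(\ta,\bN)$ at stage $\gamma+1$ you obtain an interpretation $\varrho^{\bA_{\gamma+1}}$ with $(\bA_{\gamma+1},\varrho^{\bA_{\gamma+1}},\ta)\equiv\bN$, but the chain continues only $L$-elementarily, not $L^+$-elementarily. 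Later links $\bA_{\gamma+1}\preceq\bA_{\gamma+2}\preceq\cdots$ simply forget $\varrho$, and nothing forces any interpretation of $\varrho$ on $\widehat\bA$ to make $(\widehat\bA,\cdot,\ta)\equiv\bN$: expandability to a model of a given $L^+$-theory is not preserved under passing to $L$-elementary superstructures. To make the induction go through you must carry the new relation symbols along, i.e.\ construct a chain of expansions in a growing language $L\cup\{\varrho_i : i<\mu_\alpha\}$ that is elementary in that expanded language, and at each successor step adjoin one more relation while preserving all the old ones. That preservation step is exactly where Robinson's joint consistency theorem (or Craig interpolation) must be invoked; it does not come for free from your one-step compactness argument and is the key missing idea.

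There is also a bookkeeping problem that is more than cosmetic. For a tuple $\ta$ of length $<\lambda$ the relevant targets are, up to elementary equivalence, the complete $L^+(\ta)$-theories extending $\Th_{L(\ta)}(\bA_\beta,\ta)$, and there are up to $2^{|L|+|\ta|+\aleph_0}$ of them; for $\lambda=\aleph_1$ and $|L|\le\aleph_0$ this is $2^{\aleph_0}$, which may well exceed $\lambda$. So a $\lambda$-length diagonalisation that handles one task per successor step cannot enumerate all tasks, and ``regularity'' does not rescue this. The cure is the same as above: at each successor step realise \emph{all} tasks arising from $\bA_\alpha$ simultaneously (one fresh relation symbol per task), using iterated Robinson joint consistency, and then apply downward L\"owenheim--Skolem in the full expanded language. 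The cardinal arithmetic then closes up, because $\lambda$ is regular and $(\kappa^{<\lambda})^{<\lambda}=\kappa^{<\lambda}$, so the sizes stabilise at $|A|^{<\lambda}$ as required.
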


\begin{corollary}\label{aleph1big}
	Let $L$ be a countable signature and let $\bA$ be a countable $L$-structure. Then $\bA$ has an $\aleph_1$-big elementary extension $\widehat\bA$ such that $|\widehat{A}|\le 2^{\aleph_0}$
\end{corollary}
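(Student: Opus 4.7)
The plan is to obtain the corollary as an immediate consequence of Theorem~\ref{lambdabig} applied with $\lambda = \aleph_1$. First I would verify the hypotheses of the theorem for this choice: the cardinal $\aleph_1$ is regular (being a successor cardinal), and since $L$ is countable, $|L| \le \aleph_0 < \aleph_1$, so the condition $\lambda > |L|$ is satisfied.

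Applying Theorem~\ref{lambdabig} then produces an $\aleph_1$-big elementary extension $\widehat\bA$ of cardinality at most $|A|^{<\aleph_1}$. The remaining task is to verify that $|A|^{<\aleph_1} \le 2^{\aleph_0}$. Since $|A| \le \aleph_0$ and every $\kappa < \aleph_1$ satisfies $\kappa \le \aleph_0$, I would compute
\[
|A|^{<\aleph_1} \;=\; \sup_{\kappa < \aleph_1} |A|^\kappa \;\le\; \aleph_0^{\aleph_0} \;=\; 2^{\aleph_0},
\]
noting that $\aleph_0$ itself lies below $\aleph_1$ and thus belongs to the index set, so the supremum is attained at $\kappa = \aleph_0$.

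Since the corollary is essentially a direct instantiation of the preceding theorem combined with a one-line cardinal arithmetic computation, I do not anticipate any genuine obstacle. The only point requiring a moment of care is the bookkeeping for $|A|^{<\aleph_1}$; if the notation is instead read as the cardinal sum $\sum_{\kappa < \aleph_1} |A|^\kappa$, the same bound $2^{\aleph_0}$ still holds, via $\aleph_1 \cdot 2^{\aleph_0} = 2^{\aleph_0}$ using $\aleph_1 \le 2^{\aleph_0}$.
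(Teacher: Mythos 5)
Your proposal is correct and follows essentially the same argument as the paper: invoke Theorem~\ref{lambdabig} with $\lambda=\aleph_1$ after noting that $\aleph_1$ is regular (being a successor cardinal) and exceeds $|L|$, then bound $|A|^{<\aleph_1}\le\aleph_0^{\aleph_0}=2^{\aleph_0}$. The extra remark about the alternate reading of $|A|^{<\aleph_1}$ is harmless but not needed.
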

\begin{proof}
	$\aleph_1$ is a successor cardinal, and thus it is regular (cf.~\cite[Corollary 5.3]{Jec03}). So by Theorem~\ref{lambdabig}, $\bA$ has an $\aleph_1$-big elementary extension $\widehat\bA$ with $|\widehat{A}|\le |A|^{<\aleph_1}\le \aleph_0^{<\aleph_1}=\sup\{\aleph_0^\kappa\mod \kappa \text{ cardinal}, \kappa<\aleph_1\}=\aleph_0^{\aleph_0}=2^{\aleph_0}$.
\end{proof}

\begin{proposition}[{\cite[Theorem 10.1.2, Exercise 10.1.4(a)]{Hod93}}]\label{elemhom}
	Every $\lambda$-big structure is $\lambda$-saturated and strongly elementarily $\lambda$-homoge\-neous. 
\end{proposition}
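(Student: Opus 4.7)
The plan is to derive each half of the conclusion from the splendor property by choosing the auxiliary relational symbol $\rho$ appropriately: a unary $\rho$ witnesses $\lambda$-saturation, and a binary $\rho$ encodes the required automorphism for strong $\lambda$-homogeneity. In both cases the pattern is the same---build an $L^+$-structure whose $L$-reduct is elementarily equivalent to the relevant expansion of $\bM$, then apply splendor of that expansion, which is legitimate because the parameter tuple has length $<\lambda$.

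For $\lambda$-saturation, fix parameters $\bar{a}\in M^\kappa$ with $\kappa<\lambda$, an elementary extension $\bB\succeq\bM$, and an element $b\in B$. Expand $\bB$ by interpreting a new unary $\rho$ as $\{b\}$; its $L\cup\{\bar{a}\}$-reduct is $\bB$ itself, elementarily equivalent to $(\bM,\bar{a})$. Splendor of $(\bM,\bar{a})$ supplies $\rho^{\bM}$ with $(\bM,\bar{a},\rho^{\bM})\equiv(\bB,\bar{a},\{b\})$. The sentence $\exists!\,x\,\rho(x)$ transfers, so $\rho^{\bM}=\{c\}$ for some $c\in M$; and for each $\varphi(\bar{a},x)$ satisfied by $b$ in $\bB$, the sentence $\forall x(\rho(x)\to\varphi(\bar{a},x))$ transfers too, so $(\bM,\bar{a},c)\equiv(\bB,\bar{a},b)$, as required by the definition of $\lambda$-saturation.

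For strong $\lambda$-homogeneity, start with $\bar{a},\bar{b}\in M^\kappa$, $\kappa<\lambda$, satisfying $(\bM,\bar{a})\equiv(\bM,\bar{b})$. Pick, by standard existence theorems, a $\mu$-saturated elementary extension $\bM^*\succeq\bM$ of cardinality $\mu$ for some regular $\mu>|M|+\kappa$. Such $\bM^*$ is strongly $\mu$-homogeneous by a direct back-and-forth over an enumeration of $\bM^*$, using only $\mu$-saturation and the regularity of $\mu$, so it carries an automorphism $\sigma$ with $\sigma(\bar{a})=\bar{b}$. Expand $(\bM^*,\bar{a},\bar{b})$ by interpreting a new binary $\rho$ as the graph of $\sigma$; its $L\cup\{\bar{a},\bar{b}\}$-reduct is elementarily equivalent to $(\bM,\bar{a},\bar{b})$. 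Splendor of $(\bM,\bar{a},\bar{b})$ now yields $\rho^{\bM}$ with the analogous elementary equivalence, and every first-order assertion that $\rho$ is the graph of a bijection, preserves each $L$-symbol, and sends $\bar{a}$ to $\bar{b}$ transfers sentence by sentence, so $\rho^{\bM}$ is the graph of the desired automorphism of $\bM$.

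The main obstacle is the production of $\sigma$ inside the auxiliary $\bM^*$: strong homogeneity of $\mu$-saturated models of their own cardinality must be invoked as a separate, prior fact---a pure back-and-forth argument unrelated to splendor---to avoid any circularity in bootstrapping homogeneity of $\bM$ from homogeneity of $\bM^*$. A secondary technicality is that ``$\rho$ preserves each $L$-symbol'' is an infinite scheme when $L$ is infinite, but this is harmless: elementary equivalence transfers each sentence of the scheme individually.
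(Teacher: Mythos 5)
Your proof follows the standard route for this cited result: splendor with a unary predicate yields $\lambda$-saturation, and splendor with a binary predicate encoding the graph of an automorphism yields strong elementary $\lambda$-homogeneity. The saturation half is correct and complete (note that the back direction $\neg\varphi$ is handled automatically by elementary equivalence, so $(\bM,\bar a,c)\equiv(\bB,\bar a,b)$ does follow). The homogeneity half is also correct in structure, and you were right to flag the need for a prior, splendor-free source of the auxiliary automorphism to avoid circularity.

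The one wobbly step is the claim that one can ``pick, by standard existence theorems, a $\mu$-saturated elementary extension $\bM^*\succeq\bM$ of cardinality $\mu$ for some regular $\mu>|M|+\kappa$.'' Saturated models of their own cardinality do not exist in ZFC for arbitrary regular $\mu$: one needs $\mu^{<\mu}=\mu$, which is a nontrivial set-theoretic hypothesis at that $\mu$. So the gap is not in the back-and-forth argument you invoke (that part is fine once such a model is in hand) but in the existence of the model. The repair is minor and does not change the shape of the argument: either take $\bM^*$ to be a \emph{special} model of singular strong-limit cardinality $>|M|+|L|+\kappa$, which exists in ZFC and is strongly homogeneous of its own cardinality, or, more directly, build $\bM^*$ and $\sigma$ simultaneously by a union-of-elementary-chains back-and-forth that alternately enlarges the partial elementary map $\bar a\mapsto\bar b$ and passes to an elementary extension of $\bM$; the union carries an automorphism extending $\bar a\mapsto\bar b$. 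Either way one obtains in ZFC an elementary extension with the required automorphism, and the rest of your splendor argument then goes through verbatim. (You should also note, as you implicitly do, that $(\bM,\bar a\bar b)$ is splendid because $|\bar a\bar b|=\kappa+\kappa<\lambda$, which holds whether $\kappa$ is finite or infinite.)
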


\begin{definition}
	Let $\lambda$ be a cardinal. A structure $\bM$ is called $\lambda$-homogeneous if for all $\kappa<\lambda$, and for all $\ta,\tb\in M^\kappa$ we have
	\[ (\bM,\ta)\equiv_0(\bM,\tb) \Rightarrow \exists\alpha\in\Aut(\bM): \alpha(\ta)=\tb.\]
\end{definition}
Note that a structure is $aleph_0$-homogeneous if and only if it is homogeneous.

\begin{lemma}\label{lambdahom}
	Let $\bA$ be a strongly elementarily $\lambda$-homogeneous structure whose theory has quantifier elimination. Then $\bA$ is $\lambda$-homogeneous.
\end{lemma}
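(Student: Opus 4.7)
The plan is to unpack the two definitions and observe that quantifier elimination collapses the distinction between $\equiv_0$ and $\equiv$.

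First, I would recall that $(\bA,\ta)\equiv_0(\bA,\tb)$ means $\ta$ and $\tb$ satisfy exactly the same quantifier-free formulas in $\bA$. Now suppose $\kappa<\lambda$ and $\ta,\tb\in A^\kappa$ with $(\bA,\ta)\equiv_0(\bA,\tb)$. Since $\Th(\bA)$ admits quantifier elimination, every $L$-formula $\varphi(\tx)$ (with $|\tx|$ finite; note that satisfaction depends only on finitely many coordinates of a $\kappa$-tuple at a time) is equivalent modulo $\Th(\bA)$ to a quantifier-free formula $\varphi^*(\tx)$. Hence $\bA\models\varphi(\ta)$ iff $\bA\models\varphi^*(\ta)$ iff $\bA\models\varphi^*(\tb)$ iff $\bA\models\varphi(\tb)$, so in fact $(\bA,\ta)\equiv(\bA,\tb)$.

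Then I invoke the hypothesis that $\bA$ is strongly elementarily $\lambda$-homogeneous: since $\kappa<\lambda$ and $(\bA,\ta)\equiv(\bA,\tb)$, there exists an automorphism $\alpha\in\Aut(\bA)$ with $\alpha(\ta)=\tb$. This is exactly what $\lambda$-homogeneity demands, so we are done.

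The only routine point to double-check is that the reduction from a formula in countably (or more) many variables to quantifier-free form is coordinate-wise: an elementary equivalence statement about $\kappa$-tuples is the conjunction, over all finite subtuples, of equivalences of formulas in finitely many free variables, and quantifier elimination applies to each such formula. There is no real obstacle here; the lemma is essentially a direct unwinding of definitions, and the role of quantifier elimination is simply to upgrade quantifier-free equivalence of tuples to full elementary equivalence.
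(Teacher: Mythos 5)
Your proof is correct and takes essentially the same route as the paper: use quantifier elimination to promote $\equiv_0$ to $\equiv$, then apply strong elementary $\lambda$-homogeneity to produce the automorphism. The remark on reducing infinite-tuple equivalences to finitely many free variables is a sensible clarification but is not a new idea beyond the paper's one-line proof.
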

\begin{proof}
	Let $\kappa<\lambda$, $\ta,\tb\in M^\kappa$, such that $(\bM,\ta)\equiv_0(\bM,\tb)$. Since $\Th(\bM)$ has quantifier-elimination, it follows that $(\bM,\ta)\equiv(\bM,\tb)$. Since $\bM$ is strongly elementarily $\lambda$-homogeneous, it follows that there exists $\alpha\in\Aut(\bM)$ that maps $\ta$ to $\tb$.
\end{proof}

Recall that an element $a$ of a structure $\bA$ is said to be \emph{definable} in $\bA$ over some set of parameters $X$ from A, if there exists a formula $\varphi$ with parameters from $X$ such that 
\[
	\bA\models \varphi(a)\land \exists_{=1}x\, \varphi(x).
\]
\begin{lemma}[{\cite[Exercise 10.1.5(b)]{Hod93}}]\label{twostar}
	Let $\lambda$ be an infinite cardinal, let $\bA$ be a $\lambda$-saturated structure, and let $X\subseteq A$ with $|X|<\lambda$. If an element $a\in A$ is not definable in $\bA$ over $X$, then at least two elements of $A$ realize $\tp_\bA(a/X)$.
\end{lemma}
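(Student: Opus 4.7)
The plan is to prove the contrapositive: assuming that $a$ is the unique element of $A$ realizing $p := \tp_\bA(a/X)$, I would deduce that $a$ is definable over $X$. The auxiliary object to consider is the partial $1$-type $\Sigma(y) := p(y) \cup \{y \neq a\}$, whose parameter set $X \cup \{a\}$ still has cardinality $<\lambda$.

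The central step is to show that $\Sigma$ fails to be finitely satisfiable in $\bA$. Suppose, for contradiction, that it is finitely satisfiable. Then by compactness there exist an elementary extension $\bB \succeq \bA$ and an element $b \in B$ realizing all of $\Sigma$. Enumerate $X \cup \{a\}$ as a tuple $\bar c$ of length $\kappa < \lambda$ and apply the $\lambda$-saturation of $\bA$ (in the form stated in the definition from the excerpt) to produce $c \in A$ with $(\bA, \bar c\, c) \equiv (\bB, \bar c\, b)$. Then $c$ would realize every formula of $p$ and in addition satisfy $c \neq a$, contradicting the assumption that $a$ is the only realization of $p$ in $\bA$.

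Consequently, there exist finitely many $\varphi_1(y), \ldots, \varphi_n(y) \in p$ such that $\bA \models \neg\exists y\, \bigl(\varphi_1(y) \wedge \cdots \wedge \varphi_n(y) \wedge y \neq a\bigr)$. Setting $\varphi(y) := \varphi_1(y) \wedge \cdots \wedge \varphi_n(y)$, we have $\varphi \in p$ (since types are closed under finite conjunction), so $\bA \models \varphi(a)$, and simultaneously $\bA \models \forall y\,(\varphi(y) \to y = a)$. Together these yield $\bA \models \varphi(a) \wedge \exists_{=1} x\, \varphi(x)$, and since $\varphi$ has parameters only from $X$, this witnesses that $a$ is definable in $\bA$ over $X$.

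The only genuinely subtle point in this argument is parameter bookkeeping: the auxiliary type $\Sigma$ does use the ``forbidden'' parameter $a$, but it appears exclusively in the inequality clause $y \neq a$, and therefore disappears once one passes from finite unsatisfiability of $\Sigma$ to the defining conjunction, which involves only formulas of $p$. I do not anticipate any further obstacle.
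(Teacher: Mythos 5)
The paper does not give a proof of this lemma --- it is cited to Exercise 10.1.5(b) of Hodges --- so there is no proof to compare against. Your argument is correct: passing to the contrapositive, the partial type $\Sigma := p \cup \{y\neq a\}$ over $X\cup\{a\}$ (still of size $<\lambda$) cannot be finitely satisfiable in $\bA$, since a realization $b$ in an elementary extension $\bB\succeq\bA$ would, by $\lambda$-saturation applied to the tuple enumerating $X\cup\{a\}$, pull back to a second realization of $p$ in $A$ distinct from $a$; and the finite conjunction $\varphi$ witnessing the failure then has parameters only in $X$ and uniquely defines $a$. This is exactly the standard argument one expects for the cited exercise.
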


\begin{lemma}\label{star}
	Let $\bU$ be a countable saturated structure and for some $\lambda>\aleph_0$ let  $\widehat\bU$ be a $\lambda$-saturated elementary extension of $\bU$. Then no element of $\widehat{U}\setminus U$ is definable over $U$.
\end{lemma}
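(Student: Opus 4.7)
\emph{Proof sketch.} The plan is to argue directly from elementarity, with no appeal to saturation at all. Suppose toward a contradiction that some $a \in \widehat{U}\setminus U$ were definable in $\widehat\bU$ over $U$. Unwinding the definition of definability—and remembering that a formula contains only finitely many parameters—this means there exist a formula $\varphi(x,\ty)$ of the common language and a finite tuple $\bar u \in U^n$ with
\[
   \widehat\bU\models \varphi(a,\bar u)\ \wedge\ \exists_{=1}x\,\varphi(x,\bar u).
\]

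The first step is to push the uniqueness sentence downward: since $\bar u$ lies in $U$ and $\bU\preceq\widehat\bU$, the sentence $\exists_{=1}x\,\varphi(x,\bar u)$ already holds in $\bU$, so it has a unique witness $a' \in U$ with $\bU\models\varphi(a',\bar u)$. The second step is to lift this back up by elementarity: $\widehat\bU\models\varphi(a',\bar u)$, and then the uniqueness clause in $\widehat\bU$ forces $a=a'$, contradicting the assumption $a\notin U$.

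There is essentially no obstacle; the whole proof is a single application of the Tarski--Vaught equivalence $\bU\preceq\widehat\bU$ to the formula $\exists_{=1}x\,\varphi(x,\bar u)$ with parameters in $U$. The saturation hypotheses on $\bU$ and on $\widehat\bU$ are not needed for the lemma in isolation; they are imposed so that the statement can be paired, in the proof of Proposition~\ref{alphabeta}, with the realization of types of arbitrary length over $U$ in $\widehat\bU$ and with the $\lambda$-homogeneity supplied by Lemma~\ref{lambdahom}. A stylistic variant, should one prefer to invoke Lemma~\ref{twostar}, is to note that the type $\tp_{\widehat\bU}(a/\bar u)$ lies over only finitely many parameters, hence is realized in the $\aleph_0$-saturated structure $\bU$ by some $a'$; then the uniqueness clause in $\widehat\bU$ again collapses $a$ to $a'$.
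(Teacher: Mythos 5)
Your proof is correct, and it takes a genuinely leaner route than the paper. The paper's proof invokes $\aleph_0$-saturation of $\bU$ (via Lemma~\ref{elemweakhom}) to produce an element $v\in U$ realizing the \emph{complete} type $\tp_{\widehat\bU}(u/\ta)$, and then uses only a single formula from that type to reach a contradiction with the uniqueness clause. You observe that the full type realization is overkill: since the defining formula has only finitely many parameters $\bar u\in U^n$, the sentence $\exists_{=1}x\,\varphi(x,\bar u)$ can be pushed down to $\bU$ directly by $\bU\preceq\widehat\bU$; this supplies the witness $a'\in U$, and pushing $\varphi(a',\bar u)$ back up and applying uniqueness in $\widehat\bU$ collapses $a$ onto $a'\in U$, the desired contradiction. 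As you note, neither the saturation of $\bU$ nor the $\lambda$-saturation of $\widehat\bU$ is actually used -- the lemma holds for an arbitrary proper elementary extension. That extra generality is harmless in context (the saturation hypotheses are of course required elsewhere, e.g.\ in Corollary~\ref{stabauto}), and your variant is arguably a cleaner way to present the fact. Your closing remark that one could instead realize $\tp_{\widehat\bU}(a/\bar u)$ in $\bU$ by $\aleph_0$-saturation is essentially the paper's own argument, so you have correctly identified the relationship between the two routes.
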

\begin{proof}
	Suppose, on the contrary, that $\widehat{U}\setminus U$ contains an element $u$ that is definable over $U$. In other words, there exists a formula $\varphi$ with parameters in $U$ such that 
	\[
	\widehat{\bU}\models\varphi(u)\land\exists_{=1}x\,\varphi(x).
	\]
	In $\varphi$ only finitely many elements from $U$ are used, say, $a_1,\dots,a_n$. Let $\ta:=(a_1,\dots,a_n)$. Then we can write $\varphi(x)\equiv \psi(\ta,x)$ for some formula $\psi\equiv\psi(\ty,x)$ that does not contain any parameter from $U$. Since $\bU$ is $\aleph_0$-saturated and since $\widehat\bU$ is an elementary extension of $\bU$,  there exists $v\in U$ such that $(\widehat\bU,\ta u) \equiv (\bU,\ta v)$. In particular, $\bU\models\psi(\ta,v)$. Using again that $\widehat\bU\succeq\bU$, we conclude that we have $\widehat\bU\models\psi(\ta,v)$. But this is a contradiction with $\widehat\bU\models\exists_{=1}x \,\psi(\ta,x)$.  
\end{proof}

\begin{corollary}\label{stabauto}
	Let $\bU$ be a countable saturated structure and for some sufficiently large uncountable cardinal $\lambda$ let $\widehat\bU$ be a $\lambda$-big elementary extension of $\bU$. Then for every $u\in\widehat{U}\setminus U$ there exists an automorphism $\alpha$ of $\widehat\bU$ that fixes $U$ pointwise, but that does not fix $u$.
\end{corollary}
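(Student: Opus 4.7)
The plan is to combine the non-definability lemma (Lemma~\ref{star}) with the characterization of types via saturation (Lemma~\ref{twostar}) to produce a second realization of $\tp_{\widehat\bU}(u/U)$, and then use strong elementary $\lambda$-homogeneity (from Proposition~\ref{elemhom}) to move $u$ to that second realization by an automorphism that fixes $U$ pointwise. Since $\bU$ is countable, $|U|=\aleph_0$ will be well below $\lambda$ for any sufficiently large uncountable $\lambda$, so size restrictions in the invoked results will not cause trouble.

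First, I would fix $u\in\widehat U\setminus U$. By Proposition~\ref{elemhom}, $\widehat\bU$ is both $\lambda$-saturated and strongly elementarily $\lambda$-homogeneous. Lemma~\ref{star} tells us that $u$ is not definable in $\widehat\bU$ over the parameter set $U$. Since $|U|=\aleph_0<\lambda$, we are in the setup of Lemma~\ref{twostar}, which then guarantees the existence of some element $u'\in\widehat U$ with $u'\neq u$ realizing the same type $\tp_{\widehat\bU}(u/U)$ as $u$ over $U$.

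Next, I would enumerate $U$ as a tuple $\bar c$ of length $\aleph_0$. Because $u'$ realizes the complete type of $u$ over $U$, we have $(\widehat\bU,\bar c\,u)\equiv(\widehat\bU,\bar c\,u')$. As $\aleph_0+1<\lambda$, strong elementary $\lambda$-homogeneity of $\widehat\bU$ yields an automorphism $\alpha\in\Aut(\widehat\bU)$ with $\alpha(\bar c\,u)=\bar c\,u'$. In particular, $\alpha$ fixes every entry of $\bar c$, so $\alpha$ fixes $U$ pointwise, while $\alpha(u)=u'\neq u$, which completes the proof.

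There is no real obstacle here beyond bookkeeping: each of the three inputs (non-definability, a second realizer, the transporting automorphism) has already been isolated in the preceding lemmas, and the only subtlety is making sure the cardinality bookkeeping ($|U|=\aleph_0<\lambda$) is in order so that Lemma~\ref{twostar} and Proposition~\ref{elemhom} both apply. The phrase ``sufficiently large uncountable cardinal $\lambda$'' in the statement is exactly what is needed to guarantee this, and in fact any uncountable regular $\lambda$ for which a $\lambda$-big extension exists (as produced by Theorem~\ref{lambdabig} and Corollary~\ref{aleph1big}) suffices.
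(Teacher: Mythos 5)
Your proof is correct and follows essentially the same route as the paper: invoke Proposition~\ref{elemhom} for $\lambda$-saturation and strong elementary $\lambda$-homogeneity, use Lemma~\ref{star} for non-definability of $u$ over $U$, apply Lemma~\ref{twostar} to get a second realizer $u'$, and then transport $u$ to $u'$ by an automorphism fixing $U$ pointwise. The extra remarks on cardinality bookkeeping are consistent with, though more explicit than, the paper's treatment.
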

\begin{proof}
	By Proposition~\ref{elemhom}, $\widehat\bU$ is    $\lambda$-saturated. Let $u\in \widehat{U}\setminus U$. Then, by Lemma~\ref{star}, $u$ is not definable over $U$. Hence, by Lemma~\ref{twostar}, there exists $u'\in\widehat{U}$ distinct from $u$, such that $\tp_{\widehat\bU}(u/U)=\tp_{\widehat\bU}(u'/U)$.
	If $\ta$ is an enumeration of the elements of $U$, then this can be rewritten as $(\widehat\bU,\ta u)\equiv(\widehat\bU,\ta u')$. Since, by Proposition~\ref{elemhom}, $\widehat\bU$ is strongly elementarily $\lambda$-homogeneous, it follows that there exists an automorphism $\alpha$ of $\widehat\bU$ that maps $\ta u$ to $\ta u'$. In other words, $\alpha$ fixes $U$ pointwise and moves $u$ to $u'$.
\end{proof}

The following lemma is the last technical tool we need to prepare before we can finally prove Proposition~\ref{alphabeta}:
\begin{lemma}\label{sameage}
	Let $\bM$ be an $\aleph_0$-saturated structure. Let $\mathbf{N}$ be any other model of $\Th(\bM)$.  Then $\Age(\mathbf{N})\subseteq\Age(\bM)$.
\end{lemma}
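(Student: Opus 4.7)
The plan is to show that every finitely generated substructure $\mathbf{C}\le\mathbf{N}$ embeds into $\bM$. Let $\tb=(b_1,\ldots,b_n)$ be a finite generating tuple for $\mathbf{C}$. The goal is to produce a tuple $\ta=(a_1,\ldots,a_n)\in M^n$ with $(\bM,\ta)\equiv(\mathbf{N},\tb)$. Once this is in hand, $\tb$ and $\ta$ satisfy the same quantifier-free formulas, and the map sending $t^\mathbf{N}(\tb)\mapsto t^\bM(\ta)$ for every term $t$ in $n$ variables is well defined (two terms evaluate equally at $\tb$ iff the quantifier-free formula $t_1=t_2$ holds, iff it holds for $\ta$), injective, and preserves the atomic diagram. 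This yields an isomorphism $\mathbf{C}\to\langle\ta\rangle_\bM\le\bM$, hence $\mathbf{C}\in\Age(\bM)$.

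To obtain $\ta$ I would invoke Lemma~\ref{elemweakhom}, noting that this characterization of $\aleph_0$-saturation allows the second structure to be an arbitrary $L$-structure; it need not be an elementary extension of $\bM$. Since $\bM\equiv\mathbf{N}$, the base case $(\bM,\emptyset)\equiv(\mathbf{N},\emptyset)$ is immediate. I then build $a_1,\ldots,a_n$ inductively: given $a_1,\ldots,a_k\in M$ with $(\bM,a_1,\ldots,a_k)\equiv(\mathbf{N},b_1,\ldots,b_k)$, apply Lemma~\ref{elemweakhom} with parameter tuples $(a_1,\ldots,a_k)$ in $\bM$ and $(b_1,\ldots,b_k)$ in $\mathbf{N}$, and with $d:=b_{k+1}$, to obtain some $a_{k+1}\in M$ with $(\bM,a_1,\ldots,a_{k+1})\equiv(\mathbf{N},b_1,\ldots,b_{k+1})$. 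After $n$ steps the desired tuple $\ta$ is produced.

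The only slightly subtle point is recognising that Lemma~\ref{elemweakhom} imposes no requirement of elementary embedding between $\mathbf{N}$ and $\bM$, so the induction can be driven purely from the starting assumption $\bM\equiv\mathbf{N}$. Once that is noticed there is no real obstacle: the inductive step is a direct application of $\aleph_0$-saturation, and the passage from elementary equivalence of pointed structures to an isomorphism of the substructures they generate is a standard exercise on terms.
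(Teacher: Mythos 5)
Your proposal is correct and takes essentially the same route as the paper: apply the characterization of $\aleph_0$-saturation from Lemma~\ref{elemweakhom} $n$ times (starting from $\bM\equiv\mathbf{N}$) to find a tuple in $M$ elementarily equivalent over the empty base to the generating tuple of a given finitely generated substructure of $\mathbf{N}$, and then note that elementarily equivalent pointed tuples generate isomorphic substructures. You just spell out in slightly more detail the (standard) final step that the paper leaves implicit.
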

\begin{proof}
	Let $\bA\in\Age(\mathbf{N})$. Without loss of generality, $\bA\le\mathbf{N}$, $\bA=\langle a_1,\dots,a_n\rangle_{\mathbf{N}}$. Using $n$ times that $\bM$ is $\aleph_0$-saturated (in particular, using $n$ times Lemma~\ref{elemweakhom}), we conclude that there exist $b_1,\dots,b_n\in M$ such that $(\mathbf{N},(a_1,\dots,a_n))\equiv(\bM,(b_1,\dots,b_n))$. In particular we have $\langle b_1,\dots,b_n\rangle_\bM\cong\bA$. Hence $\bA\in\Age(\bM)$.
\end{proof}

\begin{proof}[Proof of Proposition~\ref{alphabeta}.]
	Since $\bA$ is smooth, it has a countable signature. Thus, by Corollary~\ref{aleph1big}, it has an $\aleph_1$-big elementary extension of cardinality $\le 2^{\aleph_0}$. Let $\widehat\bA$ be such an extension. Also, from smoothness, using Proposition~\ref{elemhom} together with Lemma~\ref{lambdahom}, it follows that $\bA$ and $\bB$ are homogeneous and that $\widehat\bA$ is $\aleph_1$-homogeneous.
	
	The structure $\widetilde\bA$ is going to be constructed as the union of a tower of countable superstructures of $\bA$ in $\widehat\bA$:
	\[
	\widetilde\bA:=\bigcup_{i<\omega}\bA_i\qquad(\bA_0\le\bA_1\le\dots)
	\] 
	As the construction is very long and the work is very monotonous, we are going to program five robots for the job:
	\begin{description}
		\item[Robot 1] has to make sure that every local isomorphism of $\bB$ is going to be extendable to an automorphism of $\widetilde\bA$ whose restriction to $B$ is an automorphism of $\bB$.
		\item[Robot 2] has to make sure that every local isomorphism of $\bA$ is going to be extendable to an automorphism of $\widetilde\bA$ whose restriction to $A$ is an automorphism of $\bA$.
		\item[Robot 3] has to make sure that for every element $b\in\widetilde{A}\setminus B$ there is going to exist an automorphism of $\widetilde\bA$ that fixes $B$ pointwise but that does not fix $b$.
		\item[Robot 4] has to make sure that for every element $a\in\widetilde{A}\setminus A$ there is going to exist an automorphism of $\widetilde\bA$ that fixes $A$ pointwise but that does not fix $a$.
		\item[Robot 5] has to make sure that $\widetilde\bA$ is going to be homogeneous.
	\end{description}
	As was said before, the robots build up the tower $(\bA_i)_{i<\omega}$. In parallel they construct a tower $(G_i)_{i<\omega}$ of countable subgroups of $\Aut(\widehat\bA)$ such that $\bA_i$ is invariant under $G_i$, for all $i<\omega$. 
	
	$\bA_i$ and $G_i$ are going to be constructed in the $i$-th step of the construction. If a robot gets active in the $(i+1)$-st step, it knows everything that has been constructed before, and constructs $\bA_{i+1}$ and $G_{i+1}$. 
	
	Robots $1$ and $2$ will only work in the first and the second step, respectively. After that robots $3$, $4$, and $5$ will take turns. In the very beginning we define $\bA_0:=\bA$ and $G_0:=\{1_{\widehat\bA}\}$

	\begin{description}
		\item[Work of robot 1] Robot 1 takes a countable dense subgroup of $\Aut(\bB)$ and extends every element to an automorphism of $\widehat\bA$ (this is possible, since $\widehat\bA$ is $\aleph_1$-homogeneous). Then it adjoins all obtained automorphisms to $G_0$ to obtain $G_1$. Finally, it defines
		\[ 
		A_1':=\bigcup_{g\in G_1} g(A_0)\qquad\text{and}\qquad \bA_1:=\langle A_1'\rangle_{\widehat\bA}.
		\]
		As $G_1$ and $A_0$ are countable, so is $A_1'$. Finally, $\bA_1$ is countable, since the signature of $\widehat\bA$ is countable. Clearly, $A_1$ is invariant under $G_1$.
		\item[Work of robot 2] Robot 2 takes a countable dense subgroup of $\Aut(\bA)$ and extends every element to an automorphism of $\widehat\bA$. All the extensions are then adjoined to $G_1$ to obtain $G_2$. Finally,
		\[
			A_2':=\bigcup_{g\in G_2} g(A_1)\qquad\text{and}\qquad \bA_2:=\langle A_2'\rangle_{\widehat\bA}.
		\]
		\item[Work of robot 3] Suppose, robot 3 is active in step $(i+1)$. In particular, $\bA_i$ and $G_i$ are already constructed. Now, robot 3 collects all such points $x\in A_i\setminus B$ for which the pointwise stabilizer of $B$ in $G_i$ fixes $x$. Since $A_i$ is countable, only countably many elements are collected. For every such $x$, robot 3 chooses an automorphism of $\widehat\bA$ that fixes $B$ pointwise but that does not fix $x$. This is possible, since $\bB$ is countable saturated and smooth. Indeed, because $\bB\equiv\widehat\bA$, $\bB\le\widehat\bA$, and  because $\Th(\bB)$ has quantifier elimination, we have $\bB\preceq\widehat\bA$. Since $\widehat\bA$ is $\aleph_1$-big and since $\bB$ is countable saturated, it follows from Corollary~\ref{stabauto}, that such an automorphism exists. All obtained automorphisms of $\widehat\bA$ are adjoined to $G_i$ to obtain $G_{i+1}$. Finally,
		\[
		A_{i+1}':=\bigcup_{g\in G_{i+1}} g(A_i)\qquad\text{and}\qquad \bA_{i+1}:=\langle A_{i+1}'\rangle_{\widehat\bA}.
		\]
		\item[Work of robot 4] Robot 4 works analogously to robot 3. Instead of points from $A_i\setminus B$ it considers points from $A_i\setminus A$. 
		\item[Work of robot 5] Robot 5 adjoins for every local isomorphism of $\bA_i$ an extension to an automorphism of $\widehat\bA$ to $G_i$. Thus, it obtains $G_{i+1}$. Finally, as usually,
		\[
		A_{i+1}':=\bigcup_{g\in G_{i+1}} g(A_i)\qquad\text{and}\qquad \bA_{i+1}:=\langle A_{i+1}'\rangle_{\widehat\bA}.
		\]
	\end{description}
	Thus, the construction of the two towers is completed. Now we define
	\[
	\widetilde\bA:=\bigcup_{i<\omega} \bA_i, \qquad G:=\bigcup_{i<\omega} G_i.
	\]
	Our first observation is that $\widetilde{A}$ is invariant under $G$. Indeed, for all $x\in\widetilde{A}$, and $g\in G$ there is an $i<\omega$, such that $x\in A_i$, and $g\in G_i$. Since $A_i$ is invariant under $G_i$, the claim follows.
	
	Now we claim that $\widetilde\bA$ has the desired properties. Let $\alpha$ be a local isomorphism of $\bB$. Then $G_1$ contains an automorphism $\hat\alpha$ of $\widehat\bA$ that extends $\alpha$, and such that $\hat\alpha\restr_B$ is an automorphism of $\bB$. Since $\widetilde{A}$ is invariant under $G$, we obtain that $\tilde\alpha:=\hat\alpha\restr_{\widetilde{A}}$ is an automorphism of $\widetilde\bA$ that extends $\alpha$ such that $\tilde\alpha\restr_B$ is an automorphism of $\bB$.
	
	Let now  $y\in\widetilde{A}\setminus B$. Then $y\in A_i$, for some $i<\omega$. Without loss of generality, robot 3 is active in step $i$. But then, after step $i$, $G_{i+1}$ will contain an automorphism $\hat\alpha$ of $\widehat\bA$ that fixes $B$ pointwise, but does not fix $y$. We  may define $\tilde\alpha:=\hat\alpha\restr_{\widetilde{A}}$. Since $\widetilde{A}$ is invariant under $G$, we have that $\tilde\alpha$ is an automorphism of $\widetilde\bA$ that fixes $B$ pointwise and that does not fix $y$. Thus, $\bB$ is superhomogeneous in $\widetilde\bA$.
	
	Analogously, we argue that $\bA$ is superhomogeneous in $\widetilde\bA$.  
	
	It remains to show that $\widetilde\bA$ is isomorphic to $\bA$. First, since $\bA\le\widetilde\bA\le\widehat\bA$, we have $\Age(\bA)\subseteq\Age(\widetilde\bA)\subseteq\Age(\widehat\bA)$. Moreover, since $\bA$ is $\aleph_0$-saturated and elementarily equivalent with $\widehat\bA$, from Lemma~\ref{sameage} we conclude that $\Age(\widehat\bA)\subseteq\Age(\bA)$. Together this gives $\Age(\bA)=\Age(\widetilde\bA)$. As a smooth countable saturated structure $\bA$ is homogeneous. On the other hand, robot 5 assured that $\widetilde\bA$ is homogeneous, too. It follows from \Fraisse's theorem that $\bA$ and $\widetilde\bA$ are isomorphic.
\end{proof}

\section{Proofs of the main results}

\begin{proposition}\label{alphabetaex}
	Let $\bA$ be a smooth countable saturated structure. Then for every self-embedding $\iota$ of $\bA$ there exist self-embeddings $\alpha$ and $\beta$ of $\bA$, such that $\alpha=\beta\circ\iota$ and such that both, $\alpha$ and $\beta$,  have images superhomogeneous in $\bA$. 
\end{proposition}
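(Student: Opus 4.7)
The plan is to apply Proposition~\ref{alphabeta} to the substructure $\bB:=\iota(\bA)$ of $\bA$ and then transport everything back to $\bA$ via an isomorphism. First, since $\bA$ is smooth (hence model-complete), $\iota$ is automatically elementary, so $\langle\iota(A)\rangle_\bA$ is an elementary substructure of $\bA$ and, being isomorphic to $\bA$ via $\iota$, satisfies the hypotheses of Proposition~\ref{alphabeta}. Apply that proposition to obtain an extension $\widetilde\bA\supseteq\bA$ with $\widetilde\bA\cong\bA$ in which both $\bA$ and $\bB$ are superhomogeneous.

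Next, fix any isomorphism $\sigma\colon\widetilde\bA\to\bA$, set $\beta:=\sigma\restr_A$, and define $\alpha:=\beta\circ\iota$. Clearly both $\alpha,\beta$ are self-embeddings of $\bA$ and the required identity $\alpha=\beta\circ\iota$ holds by construction.

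The remaining point is to check that $\im(\beta)$ and $\im(\alpha)$ are superhomogeneous substructures of $\bA$. For this I would use the (immediate) observation that superhomogeneity is transported by isomorphisms of the ambient structure: if $\bV\le\bU$ is superhomogeneous in $\bU$ and $\tau\colon\bU\to\bU'$ is an isomorphism, then $\tau(\bV)$ is superhomogeneous in $\bU'$, because $\tau$ conjugates local isomorphisms and pointwise stabilizers. Applied to $\tau=\sigma$, this gives that $\sigma(\bA)=\im(\beta)$ and $\sigma(\bB)=\sigma(\iota(\bA))=\im(\alpha)$ are both superhomogeneous in $\sigma(\widetilde\bA)=\bA$, which is exactly what we need.

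This argument is essentially a bookkeeping exercise once Proposition~\ref{alphabeta} is in hand; the only place one has to pause is the verification that the auxiliary lemma on transport of superhomogeneity by isomorphisms actually holds, but this is just unfolding the definition. So I do not foresee any real obstacle—the entire content of the proposition sits in Proposition~\ref{alphabeta}, and Proposition~\ref{alphabetaex} is the clean reformulation that will feed directly into the proof of Proposition~\ref{endstab}.
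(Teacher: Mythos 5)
Your proposal is correct and follows essentially the same route as the paper: apply Proposition~\ref{alphabeta} to $\bB:=\iota(\bA)$, compose with an isomorphism $\widetilde\bA\to\bA$ (your $\sigma$ is the paper's $\kappa$, and your $\beta=\sigma\restr_A$ is the paper's $\kappa\circ\lambda$), and observe that superhomogeneity is carried along by isomorphisms of the ambient structure. The aside about model-completeness is harmless but unnecessary, since Proposition~\ref{alphabeta} only needs $\bB$ to be a substructure of $\bA$ isomorphic to $\bA$, which $\iota(\bA)$ is by definition of a self-embedding.
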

\begin{proof}
	Let $\bB$ be the image of $\iota$. By Proposition~\ref{alphabeta}, $\bA$ has an extension $\widetilde\bA$, such that $\widetilde\bA\cong\bA$, and such that $\bA$ and $\bB$ are superhomogeneous in $\widetilde\bA$. Let $\lambda\colon\bA\injto\widetilde\bA$ be the identical embedding, and let $\kappa\colon\widetilde\bA\to\bA$ be any isomorphism:
	\[
	\begin{tikzcd}
		\bA \rar[swap]{\cong}\arrow[bend left,hook]{rr}{\iota} & \bB\rar[hook,swap]{=} & \bA\rar[hook]{\lambda}[swap]{=} & \widetilde\bA\rar{\kappa}[swap]{\cong} & \bA.
	\end{tikzcd}
	\]
	We define $\alpha:=\kappa\circ\lambda\circ\iota$, and $\beta:=\kappa\circ\lambda$. Then $\alpha=\beta\circ\iota$. Observe that $\im(\alpha)=\kappa(B)$. Since $\bB$ is superhomogeneous in $\widetilde\bA$ and since $\kappa$ is an isomorphism, we have also that the image of $\alpha$ induces a superhomogeneous substructure of $\bA$.  Observe further that $\im(\beta)=\kappa(A)$. By the same reasoning as above the image of $\beta$ induces a superhomogeneous substructure of $\bA$.
\end{proof}

\begin{proof}[Proof of Proposition~\ref{endstab}]
	By Lemma~\ref{smooth} we may assume without loss of generality that $\bA$ is smooth. In particular, $\bA$ is homogeneous and every self-embedding is elementary. Let $\iota$ be any endomorphism of $\Emb(\bA)$ that fixes $\Aut(\bA)$ pointwise. By Proposition~\ref{superhomfix}, $\iota$ fixes all self-embeddings of $\bA$ that have a superhomogeneous image. Let $h\in\Emb(\bA)$. By Proposition~\ref{alphabetaex}, there exist $\alpha,\beta\in\Emb(\bA)$ with superhomogeneous images in $\bA$, such that $\alpha=\beta\circ h$. But then we may compute:
	\[
	\beta\circ h = \alpha = \iota(\alpha)= \iota(\beta\circ h)=\iota(\beta)\circ \iota(h) = \beta\circ \iota(h).
	\]
	Since $\beta$ is injective, it follows that $h=\iota(h)$. In other words, $\iota$ is the identity on $\Emb(\bA)$. 
\end{proof}

%

\end{document}